\documentclass[12pt]{amsart}
\usepackage{amssymb,amsmath,amsthm}
\usepackage{url}
\usepackage{mathtools}
\usepackage{tikz}

\usetikzlibrary{arrows,shapes,automata,backgrounds,decorations,petri,positioning,patterns}

\def\drawpolygon#1,#2;{
    \begin{pgfonlayer}{background}
        \draw[gray,line width=20,join=round      ](#1.center)foreach\A in{#2}{--(\A.center)}--cycle;
        \filldraw[line width=19,join=round,white](#1.center)foreach\A in{#2}{--(\A.center)}--cycle;
    \end{pgfonlayer}
}

\usepackage{enumitem}

\addtolength{\hoffset}{-2cm}
\addtolength{\textwidth}{4cm}
\addtolength{\voffset}{-1cm}
\addtolength{\textheight}{1.5cm}

\theoremstyle{plain}
\theoremstyle{definition}
\newtheorem{theorem}{Theorem}[section]

\newtheorem{lemma}[theorem]{Lemma}

\newtheorem{definition}[theorem]{Definition}

\newtheorem{question}[theorem]{Question}
\newtheorem{example}[theorem]{Example}
\newtheorem{proposition}[theorem]{Proposition}
\newtheorem{corollary}[theorem]{Corollary}

\newtheorem*{repp@ex}{\repp@title (continued)}
\newcommand{\newreppex}[2]{
\newenvironment{repp#1}[1]{
 \def\repp@title{#2 \ref{##1}}
 \begin{repp@ex}}
 {\end{repp@ex}}}
\makeatother
\newreppex{ex}{Example}

\newcommand{\mf}[1]{\mbox{$\mathfrak #1$}}
\newcommand{\poset}{\mbox{$\mathcal{P}$}}
\newcommand{\hatposet}{\mbox{$\overline{\mathcal{P}}$}}
\newcommand{\simple}{\mbox{$\textsf{simp}$}}

\newcommand{\interval}{\mbox{$\mathcal{I}$}}
\newcommand{\ub}[1]{\underbracket[.5pt][1pt] {#1}}
\newcommand{\ob}[1]{\overbracket[.5pt][1pt] {#1}}

\begin{document}

\title{Interval posets of permutations}

\date{}

\author{Bridget Eileen Tenner}
\address{Department of Mathematical Sciences, DePaul University, Chicago, IL 60614}
\email{bridget@math.depaul.edu}
\thanks{Research partially supported by Simons Foundation Collaboration Grant for Mathematicians 277603 and by a DePaul University Faculty Summer Research Grant.\\
\indent Data sharing not applicable to this article as no datasets were generated or analyzed during the current study.}

\keywords{}%

\subjclass[2010]{Primary: 05A05; 
Secondary: 06A07
}

\begin{abstract}
The interval poset of a permutation catalogues the intervals that appear in its one-line notation, according to set inclusion. We study this poset, describing its structural, characterizing, and enumerative properties.
\end{abstract}

\maketitle

\section{Introduction}\label{sec:intro}

A permutation is a bijection between two totally ordered sets. Typically we focus on $\mf{S}_n$, the set of permutations of $[n] := \{1,\ldots, n\}$. For the purposes of this work, we write permutations as words in one-line notation
$$w = w(1)w(2) w(3) \cdots,$$
and we take an ``interval'' in a permutation to be an interval of values.

\begin{definition}\label{defn:interval}
An \emph{interval} in a permutation $w$ is an interval of values (possibly empty) that appear in consecutive positions of $w$. That is, $[h,h+j]$ is an interval of $w$ if $\{w(t) : t \in [i,i+j]\} = [h,h+j]$, for some $i$.
\end{definition}

Clearly $[1,n]$ is an interval of every $w \in \mf{S}_n$, as is $\{i\}$ for each $i \in [n]$. An interval of $w \in \mf{S}_n$ is \emph{proper} if it has between $2$ and $n-1$ elements.

\begin{example}\label{ex:43187562}
The proper intervals of $43187562$ are $[3,4]$, $[5,6]$, $[5,7]$, $[5,8]$, and $[7,8]$.
\end{example}

A permutation need not have any proper intervals, as is the case for $2413 \in \mf{S}_4$ and $35142 \in \mf{S}_5$. Such permutations are \emph{simple permutations}, and appear throughout the literature of permutations and permutation patterns (see, for example, \cite{albert atkinson, brignall}). Let
$$\simple(n)$$
denote the number of simple permutations in $\mf{S}_n$, yielding the sequence $1, 2, 0, 2, 6, 46, 338, \ldots$ \cite[A111111]{oeis}. Note that $\simple(3) = 0$, whereas $\simple(n) > 0$ for all other positive integers $n$.

As shown in Example~\ref{ex:43187562}, the intervals of a permutation need not be disjoint. The relationships among the intervals of a permutation have a natural poset structure, and this is what we study in this work. The reader is referred to \cite{ec1} for basic poset terminology.

\begin{definition}\label{defn:interval poset}
The \emph{interval poset} of $w \in \mf{S}_n$ is the poset $\poset(w)$ whose elements are the nonempty intervals of $w$ and whose order relations are defined by set inclusion. The \emph{closed interval poset} $\hatposet(w)$ is obtained from $\poset(w)$ by adjoining a minimum element $\widehat{0}$, which we think of as representing the empty interval.
\end{definition}

The embedding of this poset matters, and the vertices will have a canonical labeling: minimal elements will be labeled $1, 2, 3, \ldots$ from left to right, and non-minimal elements will be labeled by the collection (interval) of labels below them in the poset.

Put another way, $\hatposet(w)$ is the induced subposet of the boolean algebra on $[n]$ formed by the (possibly empty) intervals of $w$. The interval poset for the permutation $43187562$ is shown in Figure~\ref{fig:ex poset(43187562)}, and the poset for $123$ appears in Figure~\ref{fig:ex poset(123)}. As the figures indicate, interval posets are not necessarily trees.
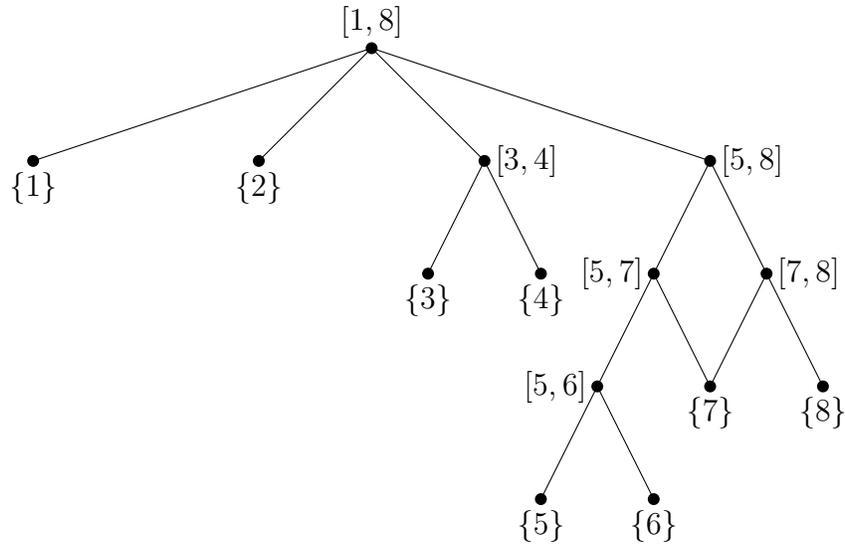
\begin{figure}[htbp]
\begin{tikzpicture}[scale=.75]
\fill (0,12) circle (3pt) coordinate (18);
\fill (-6,10) circle (3pt) coordinate (1);
\fill (-2,10) circle (3pt) coordinate (2);
\fill (2,10) circle (3pt) coordinate (34);
\fill (6,10) circle (3pt) coordinate (58);
\fill (1,8) circle (3pt) coordinate (3);
\fill (3,8) circle (3pt) coordinate (4);
\fill (5,8) circle (3pt) coordinate (57);
\fill (7,8) circle (3pt) coordinate (78);
\fill (4,6) circle (3pt) coordinate (56);
\fill (6,6) circle (3pt) coordinate (7);
\fill (8,6) circle (3pt) coordinate (8);
\fill (3,4) circle (3pt) coordinate (5);
\fill (5,4) circle (3pt) coordinate (6);
\foreach \x in {1,2,34,58} {\draw (18) -- (\x);}
\foreach \x in {3,4} {\draw (34) -- (\x);}
\foreach \x in {57,78} {\draw (58) -- (\x);}
\foreach \x in {56,7} {\draw (57) -- (\x);}
\foreach \x in {7,8} {\draw (78) -- (\x);}
\foreach \x in {5,6} {\draw (56) -- (\x);}
\foreach \x in {1,2,3,4,5,6,7,8} {\draw (\x) node[below] {$\{\x\}$};}
\draw (56) node[left] {$[5,6]$};
\draw (57) node[left] {$[5,7]$};
\draw (78) node[right] {$[7,8]$};
\draw (58) node[right] {$[5,8]$};
\draw (34) node[right] {$[3,4]$};
\draw (18) node[above] {$[1,8]$};
\end{tikzpicture}
\caption{The interval poset $\poset(43187562)$.} \label{fig:ex poset(43187562)}
\end{figure}

\begin{figure}[htbp]
\begin{tikzpicture}[scale=.75]
\fill (0,4) circle (3pt) coordinate (13);
\fill (-1,2) circle (3pt) coordinate (12);
\fill (1,2) circle (3pt) coordinate (23);
\fill (-2,0) circle (3pt) coordinate (1);
\fill (0,0) circle (3pt) coordinate (2);
\fill (2,0) circle (3pt) coordinate (3);
\draw (-2,0) -- (0,4) -- (2,0);
\draw (-1,2) -- (0,0) -- (1,2);
\draw (13) node[above] {$[1,3]$};
\draw (1) node[below] {$\{1\}$};
\draw (2) node[below] {$\{2\}$};
\draw (3) node[below] {$\{3\}$};
\draw (12) node[left] {$[1,2]$};
\draw (23) node[right] {$[2,3]$};
\end{tikzpicture}
\caption{The interval poset $\poset(123)$.} \label{fig:ex poset(123)}
\end{figure}
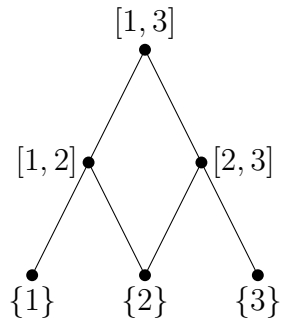

The interval poset of a permutation is closely related to the \emph{substitution decomposition} of Albert and Atkinson \cite{albert atkinson}, as we will discuss in the next section.
 
The goal of this paper is to study the interval poset and closed interval poset of a permutation. We lay out terminology and basic facts about interval posets in Section~\ref{sec:basic facts}. Many of these follow immediately from the definition of the poset. In Section~\ref{sec:structural}, we answer deeper questions about the structure of these posets (Theorems~\ref{thm:planar}, \ref{thm:lattice}, \ref{thm:modular}, and~\ref{thm:distributive}). 
Section~\ref{sec:characterizing} turns to questions of characterization; namely, which posets are of the form $\poset(w)$ for some $w$ (Theorem~\ref{thm:interval poset characterization}). 
In Section~\ref{sec:enumerate}, we enumerate the permutations $w$ satisfying $\poset(w) = \mathcal{P}$ for a given $\mathcal{P}$ (Theorem~\ref{thm:number of permutations for a poset}). 
Special families of interval posets are studied in Section~\ref{sec:special families}, with characterizations in terms of pattern avoidance (Theorems~\ref{thm:interval trees} and~\ref{thm:binary} and Corollary~\ref{cor:binary trees}) and enumerations (Corollaries~\ref{cor:binary enumeration} and~\ref{cor:binary tree enumeration}). 
We close the paper with proposed directions for further research.

\section{Preliminaries and the substitution decomposition}\label{sec:basic facts}

Fix a permutation $w \in \mf{S}_n$. As is typical with permutations, we will frequently refer to $\{1, \ldots, n\}$ as ``letters'' of the permutation. The permutation $1 \in \mf{S}_1$ is of limited intrigue, and so unless specified otherwise, we will always assume that permutations have at least two letters and interval posets have at least two minimal elements. Both $\poset(w)$ and $\hatposet(w)$ have a unique maximal element: the interval $[1,n]$. Both posets are finite, with $|\poset(w)| = |\hatposet(w)| - 1 \le \binom{n}{2}-1$, since intervals must be consecutive. Following \cite{ec1}, we way that the \emph{rank} of a poset is the maximum chain length in the poset. The maximal chains in our posets might not all have the same length, and the rank of $\poset(w)$ can be anything between $1$ and $n-1$ (with the exception that the rank is always $2$ when $n=3$). The rank of $\hatposet(w)$ is one more than the rank of $\poset(w)$. It follows from the definitions that each element in $\poset(w)$ represents an interval of consecutive integers, and no element of $\poset(w)$ covers exactly one element. Principal order ideals, also called down-sets, will be a key object in this work.

\begin{definition}
Given a poset $P$ and an element $x \in P$, the \emph{principal order ideal} of $x$ is the collection of elements less than or equal to $x$; that is, it is the set $\{y \in P : y \le x\}$.
\end{definition}

The minimal elements in $\poset(w)$ represent the letters permuted by $w$, and an arbitrary element in $\poset(w)$ represents the interval formed by the union of all minimal elements in its principal order ideal.

By definition, every simple permutation in $\mf{S}_n$ has the same interval poset: a maximal element covering $n$ minimal elements.

The next result is straightforward, but useful enough to warrant writing down.

\begin{lemma}\label{lem:poi}
The principal order ideal of an interval $I \in \poset(w)$ is isomorphic to the interval poset for the permutation formed by restricting the word $w$ to the letters of $I$; similarly for principal order ideals in the closed interval poset.
\end{lemma}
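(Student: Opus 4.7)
The plan is to unpack the definitions and produce an explicit bijection that preserves the order relations. Fix an interval $I = [h, h+j] \in \poset(w)$, and let $[i, i+j]$ be the (consecutive) positions of $w$ at which the letters of $I$ appear. Let $w'$ denote the permutation of $[1, j+1]$ obtained by restricting $w$ to positions $[i, i+j]$ and standardizing values via $h+k-1 \mapsto k$ for $k \in [1, j+1]$.

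First I would describe the map explicitly. Given an interval $J \in \poset(w)$ with $J \subseteq I$, send $J$ to its standardization inside $w'$. The key observation, which I would prove at the outset, is that because $J$ is a set of values in $I$ and the letters of $J$ occupy consecutive positions of $w$, those positions must lie inside $[i, i+j]$: any position outside $[i, i+j]$ carries a value outside $I$, hence outside $J$, but $J$'s positions form a consecutive block, so moving outside $[i, i+j]$ would interrupt that block. Thus $J$ corresponds to a consecutive block of positions in $w'$ whose standardized values form a consecutive range in $[1, j+1]$, i.e., an interval of $w'$.

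Next I would exhibit the inverse. Any interval $J'$ of $w'$ comes from a consecutive block of positions in $[i, i+j]$ whose standardized values form $[h'-h+1, h''-h+1]$ for some $[h', h''] \subseteq [h, h+j]$; unstandardizing, this is a consecutive block of positions in $w$ whose values form the interval $[h', h''] \subseteq I$, hence an element of $\poset(w)$ contained in $I$. Composition with the forward map is clearly the identity. Since the standardization $J \mapsto \{k - h + 1 : k \in J\}$ is an inclusion-preserving, inclusion-reflecting bijection on subsets of $I$, the induced poset map is an isomorphism from the principal order ideal of $I$ in $\poset(w)$ onto $\poset(w')$. For the closed version, extending by $\widehat{0} \mapsto \widehat{0}$ immediately gives the corresponding isomorphism of principal order ideals in $\hatposet(w)$ and $\hatposet(w')$.

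There is no real obstacle here; the only thing that requires care is the implication ``values in $I$ forces positions in $[i, i+j]$,'' which follows from $I$ itself being an interval of $w$ together with the consecutivity of the positions of $J$. Everything else is bookkeeping about standardization.
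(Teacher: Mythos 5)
Your proof is correct. The paper states this lemma without proof, calling it straightforward, and your argument supplies exactly the definition-unpacking it has in mind: the essential point is that any interval $J \subseteq I$ of $w$ occupies positions inside the block of positions occupied by $I$ (since positions outside that block carry values outside $I$), after which standardization gives an inclusion-preserving, inclusion-reflecting bijection between the principal order ideal of $I$ and the intervals of the restricted word, extended by $\widehat{0} \mapsto \widehat{0}$ for the closed version.
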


For example, the principal order ideal of $[5,8] \in \poset(43187562)$, depicted in Figure~\ref{fig:ex poset(43187562)}, is isomorphic to the interval poset of the permutation (that is, the subword) $8756$; equivalently, to the interval poset of its order isomorphic permutation $4312$.

Another feature of interval posets that follows directly from definitions will also be critical to some of our later arguments.

\begin{lemma}\label{lem:isolated cover}
An interval $H \in \poset(w)$ is covered by exactly one interval $I$ if and only if for every interval $J \supset H$, we also have $J \supseteq I$.
\end{lemma}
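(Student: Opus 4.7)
The statement is essentially a general fact about cover relations in finite posets, applied to $\poset(w)$. My plan is to fix an interval $I$ that covers $H$ in $\poset(w)$ and argue the two directions separately, with no appeal to the specific structure of intervals of values in a permutation.

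For the forward direction, I would assume that $I$ is the unique cover of $H$ and take an arbitrary interval $J$ with $H \subsetneq J$. Since $\poset(w)$ is finite, I can extract a saturated chain from $H$ up to $J$, and read off its bottom step as a cover relation $H \lessdot K$ with $K \subseteq J$. By the uniqueness hypothesis, $K = I$, whence $I \subseteq J$.

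For the converse, I would assume that every interval $J$ strictly containing $H$ satisfies $J \supseteq I$, and let $I'$ be any interval that covers $H$. Since $I' \supsetneq H$, the hypothesis gives $I' \supseteq I$. If the containment were strict, then $I$ would lie strictly between $H$ and $I'$, contradicting $H \lessdot I'$; therefore $I' = I$, and $I$ is the unique cover of $H$.

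I do not anticipate any real obstacle here: both implications follow immediately from the definition of a cover relation together with the finiteness of $\poset(w)$. The only point requiring slight care is the reading of the lemma, namely that the symbol $I$ on the two sides of the biconditional refers to the same interval, so that the backward direction is to show that the hypothesized $I$ is forced to be \emph{the} unique cover (rather than merely to show the existence of one).
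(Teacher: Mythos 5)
Your proof is correct. The paper offers no proof of this lemma at all --- it is introduced as a ``feature of interval posets that follows directly from definitions'' --- and your argument supplies exactly the routine order-theoretic verification that is being left to the reader: the saturated-chain extraction for the forward direction, and the cover-squeezing for the converse, neither of which needs any permutation-specific structure. Your closing remark about how to parse the quantification of $I$ in the biconditional is also the right reading of the statement.
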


A familiar involution on permutations is relevant to this work.

\begin{definition}
The \emph{reverse} of the permutation $w = w(1)\cdots w(n)$ is the permutation
$$w^R := w(n) \cdots w(1).$$
\end{definition}

Because intervals depend on being consecutive, with no directional preference, and because we take ``intervals'' in a permutation to be intervals of values, a permutation and its reverse have the same intervals. Other permutation symmetries (such as inverse and complement) preserve the structure of the interval poset, but not generally the intervals themselves.

\begin{lemma}\label{lem:reverse}
$\poset(w) = \poset(w^R)$ for all $w$.
\end{lemma}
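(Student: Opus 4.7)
The plan is to prove this directly from the definition of an interval (Definition~\ref{defn:interval}), observing that the notion of an interval depends only on a set of values occupying a window of consecutive positions, with no orientation preference built in. Since $\poset(w)$ is literally defined (not just up to isomorphism) as the set of intervals of $w$ ordered by inclusion, it suffices to show that $w$ and $w^R$ have the \emph{same} set of intervals.

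Concretely, I would fix $w \in \mf{S}_n$ and a candidate interval $[h, h+j] \subseteq [n]$, and show that $[h, h+j]$ is an interval of $w$ if and only if it is an interval of $w^R$. By definition, $[h, h+j]$ is an interval of $w$ means there exists $i$ with $\{w(t) : t \in [i, i+j]\} = [h, h+j]$. Using $w^R(s) = w(n+1-s)$, the analogous condition for $w^R$ is $\{w(n+1-s) : s \in [i', i'+j]\} = [h, h+j]$ for some $i'$. The substitution $t = n+1-s$ is a bijection between the windows $[i', i'+j]$ and $[n-i'-j+1, n-i'+1]$ of positions, so the two conditions are equivalent via $i = n - i' - j + 1$. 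This gives the equality of the interval sets.

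Once the underlying sets agree, the poset structures agree trivially, because inclusion of value-sets does not reference position. The canonical labelling described after Definition~\ref{defn:interval poset} deserves a brief sanity check: minimal elements are labeled left-to-right by position, and reversing $w$ does reverse the horizontal positions of the letters in the diagram, but the labels themselves are attached to \emph{values} (the singletons $\{1\}, \ldots, \{n\}$), so the labels and order relations are identical as an abstract poset. If one insists on the embedded diagram, then technically $\poset(w)$ and $\poset(w^R)$ are reflections of each other, but as labeled posets they coincide.

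There is no real obstacle in this proof; the only subtlety is being careful about what equality of embedded posets means, which is handled by the remark above. The conclusion for the closed interval poset $\hatposet(w) = \hatposet(w^R)$ follows immediately by adjoining $\widehat{0}$ to both sides.
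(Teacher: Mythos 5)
Your proposal is correct and matches the paper's approach: the paper justifies this lemma with the single observation that intervals are sets of values occupying consecutive positions, with no directional preference, so $w$ and $w^R$ have identical interval sets; you have simply spelled out the position-reindexing $t = n+1-s$ that makes this precise. The extra remark about the canonical labelling is a reasonable sanity check but not needed, since the lemma asserts equality of the posets as sets of intervals ordered by inclusion.
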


Throughout this work, we will prefer to draw interval posets in a particular way.

\begin{definition}\label{defn:increasing intervals}
If $I_1, I_2, \ldots, I_r$ are non-nesting intervals so that $\min(I_j) < \min(I_{j+1})$ for all $j$ (equivalently, $\max(I_j) < \max(I_{j+1})$), then the sequence $I_1, \ldots, I_r$ is in \emph{increasing order}.
\end{definition}

No two intervals in an antichain are nesting; that is, neither interval is a subset of the other.

\begin{definition}\label{defn:canonical embedding}
The \emph{canonical} Hasse diagram of $\poset(w)$ is drawn so that the (antichain of) elements of a fixed depth from the maximal element $[1,n]$ appear at the same height, and in increasing order from left to right across the poset. The canonical Hasse diagram of $\hatposet(w)$ is obtained by adjoining a minimum element to that diagram of $\poset(w)$.
\end{definition}

Figure~\ref{fig:ex poset(43187562)} depicts the canonical Hasse diagram of $\poset(43187562)$.

We will always draw the canonical Hasse diagrams of $\poset(w)$ and $\hatposet(w)$. Noting that the minimal elements of $\poset(w)$ (atoms of $\hatposet(w)$) are the $1$-element intervals $\{i\}$ for $i \in [n]$, we can omit the element labels in a canonical Hasse diagram. It will sometimes be useful to think of Hasse diagrams as directed graphs, with the maximal element as the root, and we will use this language (e.g., ``tree,'' ``child'') accordingly.

We close this section by recalling a standard operation on permutations.

\begin{definition}\label{defn:inflation}
Fix $w \in \mf{S}_n$ and $p_i \in \mf{S}_{m_i}$ for $i \in [1,n]$. The \emph{inflation} of $w$ by $p_1, \ldots, p_n$ is the permutation $w[p_1,\ldots, p_n] \in \mf{S}_{\sum m_i}$ defined by replacing each $w(i)$ by an interval that is order isomorphic to $p_i$, so that the letters in the segment replacing $w(i)$ are all larger than the letters in the segment replacing $w(j)$ if and only if $w(i) > w(j)$. When there is a $j$ such that $m_i = 1$ for all $i \neq j$, we refer to this as \emph{inflating the $w(j)$ in $w$ by $p_j$}.
\end{definition}

\begin{example}
$3142[21,1,4312,1] = 43187562$. Inflating the $3$ in $3176452$ by $21$ produces that same permutation: $43187562$.
\end{example}

This brings us to the so-called \emph{substitution decomposition} of Albert and Atkinson (see \cite{albert atkinson} and also \cite{brignall}).

\begin{proposition}[{\cite{albert atkinson}}]\label{prop:substitution decomposition}
For any $n \ge 2$ and any $w \in \mf{S}_n$, there is a unique simple permutation $v \in \mf{S}_k$ for which $w = v[p_1,\ldots,p_k]$. Moreover, if $k \ge 4$ (meaning, in particular, that $v \neq 12,21$), then the permutations $p_1,\ldots,p_k$ are uniquely determined.
\end{proposition}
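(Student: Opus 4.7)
The plan is to proceed by strong induction on $n$, with one structural closure lemma as the engine: if $I$ and $J$ are intervals of $w$ sharing at least one letter and with neither containing the other, then both $I\cup J$ and $I\cap J$ are also intervals of $w$. Both facts are immediate from unpacking Definition~\ref{defn:interval} and comparing the position ranges of $I$ and $J$.

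For existence I would case on $w$. If $w$ is simple, take $v=w$ and each $p_i=1\in\mf{S}_1$. Otherwise, let $M_1,\ldots,M_r$ be the maximal proper intervals of $w$. The closure lemma forces a dichotomy. Either no two of the $M_i$ overlap, in which case they---together with singletons $\{i\}$ for $i\notin\bigcup_jM_j$---partition $[n]$ into intervals whose induced quotient permutation has no proper interval (any such interval would pull back to a proper interval of $w$ contradicting the maximality of some $M_i$), hence is simple. Or some two $M_i,M_j$ overlap, in which case maximality forces $M_i\cup M_j=[1,n]$, and the position layout of their overlap realizes $w$ as a direct or skew sum, giving $v\in\{12,21\}$.

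For uniqueness, suppose $w=v[p_1,\ldots,p_k]=v'[q_1,\ldots,q_{k'}]$ with $v,v'$ both simple, with block partitions $\{B_i\}$ and $\{B'_j\}$. Form the common refinement; the closure lemma makes each nonempty intersection $R_{ij}=B_i\cap B'_j$ an interval of $w$, so $\{R_{ij}\}$ is itself a partition of $[n]$ into intervals. The induced quotient $u$ has strictly fewer than $n$ letters unless both original partitions were the singleton partition, in which case $v=v'=w$. Otherwise $u$ inflates to $v$ by coarsening along the $B_i$ and to $v'$ by coarsening along the $B'_j$; the inductive hypothesis applied to $u$ gives $v=v'$, and when $k\ge 4$ the inductive uniqueness of the factors for $u$ forces the two coarsenings of $u$ to coincide, so $B_i=B'_i$ and $p_i=q_i$ for every $i$.

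The main obstacle is verifying the overlapping half of the existence dichotomy: showing that two overlapping maximal proper intervals really do force $w$ to be a direct or skew sum. If $M_i=[1,b]$ and $M_j=[a,n]$ overlap (so $a\le b$ and $M_i\cup M_j=[1,n]$), the closure lemma gives $M_i\cap M_j=[a,b]$ as an interval. A short analysis of the positions of $[a,b]$ in $w$ relative to the value ranges $[1,a-1]$ and $[b+1,n]$ then extracts the claimed sum structure, the two cases ($w$ direct versus skew sum) corresponding to whether $[a,b]$ sits in the middle positions in the natural or reversed order. The rest of the argument is bookkeeping with the inductive hypothesis.
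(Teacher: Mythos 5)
The paper does not prove Proposition~\ref{prop:substitution decomposition}; it is imported from Albert--Atkinson, so there is no internal proof to compare against and your argument has to stand on its own. Your existence argument does: the closure lemma is correct, and the dichotomy on maximal proper intervals (pairwise disjoint, giving a simple quotient, versus two that overlap, forcing their union to be $[1,n]$ and hence a sum or skew sum structure) is exactly the standard route. The position analysis you defer in the overlapping case also goes through: if $M_i=[1,b]$ and $M_j=[a,n]$ overlap, their position ranges must likewise cover $[1,n]$ and overlap, which pins the values $[1,a-1]$, $[a,b]$, $[b+1,n]$ to three consecutive position blocks in either the natural or reversed order.

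The uniqueness argument, however, has a genuine gap. The claim that the common refinement $\{R_{ij}\}$ is the singleton partition only when both $\{B_i\}$ and $\{B'_j\}$ are singleton partitions is false: for $w=123$ we have $12[12,1]=12[1,12]$, whose block partitions $\{[1,2],\{3\}\}$ and $\{\{1\},[2,3]\}$ refine to singletons although neither is trivial. In every such degenerate case your quotient $u$ equals $w$, the induction does not advance, and the argument proves nothing --- and these are precisely the cases where the content of uniqueness lives: ruling out that $w$ is simultaneously sum- and skew-decomposable, or simultaneously an inflation of a simple $v$ with $k\ge 4$ and a sum $q_1\oplus q_2$ whose blocks interleave the $B_i$ finely. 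Both exclusions need explicit arguments that are currently absent. They are not hard to supply: comparing the values occupying the first block of positions shows no permutation is both sum- and skew-decomposable; and for simple $v\in\mf{S}_k$ with $k\ge 4$, neither $1$ nor $k$ can occupy position $1$ or $k$ of $v$, from which one shows that every interval of $v[p_1,\ldots,p_k]$ other than $[1,n]$ is contained in a single block $B_i$, so the $B_i$ are exactly the maximal elements among proper intervals and the whole decomposition is forced. Either patch these cases directly, or restructure uniqueness around that maximal-proper-interval characterization instead of the common refinement.
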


A permutation that can be written as $12[p,q] = p\oplus q$ is \emph{sum decomposable}, and \emph{sum indecomposable} otherwise. A permutation that can be written as $21[p,q] = p\ominus q$ is \emph{skew decomposable}, and \emph{skew indecomposable} otherwise. From Lemma~\ref{lem:reverse}, we know that sum decomposable permutations and skew decomposable permutations have the same interval posets for the same pair $(p,q)$.

Proposition~\ref{prop:substitution decomposition}, and the partitioning of $\mf{S}_n$ that we can make as a result, lay the groundwork for interval posets.

\begin{corollary}\label{cor:permutations are inflations or sums}
Each permutation $w$ falls into exactly one of the following categories:
\begin{itemize}
\item $w = v[p_1,\ldots,p_k]$ where $v$ is simple and $k \ge 4$,
\item $w$ is sum decomposable, in which case we can write $w$ as a maximal direct sum of sum indecomposable permutations $w = p_1 \oplus \cdots \oplus p_k$, or
\item $w$ is skew decomposable, in which case we can write $w$ as a maximal skew sum of skew indecomposable permutations $w = p_1 \ominus \cdots \ominus p_k$.
\end{itemize}
\end{corollary}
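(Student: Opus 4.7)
My plan is to apply Proposition~\ref{prop:substitution decomposition} directly to $w$ and case-split on the size of the unique simple permutation $v$ it produces. Write $w = v[p_1,\ldots,p_k]$ with $v\in\mf{S}_k$ simple. Since $|w|\ge 2$ and $\simple(3)=0$, the only options for $k$ are $k=2$ or $k\ge 4$ (the possibility $k=1$ would mean $v=1$ and $w=p_1$, but then in the recursively coarsest presentation guaranteed by Proposition~\ref{prop:substitution decomposition} we would instead take $v = w$ with all $p_i = 1$, yielding $k=n\ge 2$).

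In the case $k\ge 4$, $w$ is already in the form listed in the first bullet. In the case $k=2$, we have $v\in\{12,21\}$, so $w = p_1\oplus p_2$ or $w = p_1\ominus p_2$, placing $w$ in the second or third bullet. The three cases are mutually exclusive because $v$ is uniquely attached to $w$: a sum-decomposable $w$ forces $v=12$, a skew-decomposable $w$ forces $v=21$, and these contradict $k\ge 4$. To see that the sum and skew cases themselves cannot overlap, note that if $w=p\oplus q=p'\ominus q'$ with all pieces nonempty, then $\{w(1),\ldots,w(|p|)\}=[1,|p|]$ while $\{w(1),\ldots,w(|p'|)\}=[n-|p'|+1,n]$. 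If $|p|\le |p'|$, inclusion of these value sets forces $[1,|p|]\subseteq[n-|p'|+1,n]$, hence $n-|p'|+1\le 1$ and $|p'|\ge n$, contradicting $|q'|\ge 1$; the reverse inequality is symmetric.

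It remains to produce the maximal decompositions in the second and third bullets and to argue their uniqueness. I would proceed by induction on $n$ in the sum case (the skew case is identical after applying Lemma~\ref{lem:reverse}-style symmetry, or repeating the argument): if $w=p_1\oplus p_2$ and $p_1$ is itself sum decomposable, apply the inductive hypothesis to $p_1$ to refine the decomposition; iterating, this process terminates because each summand strictly shrinks, and it yields a direct sum of sum-indecomposable pieces. For uniqueness, the leftmost summand of any maximal decomposition is forced to be the permutation induced on $\{w(1),\ldots,w(m)\}$, where $m$ is the \emph{smallest} $m\in[1,n-1]$ with $\{w(1),\ldots,w(m)\}=[1,m]$; if $m$ were not smallest, the corresponding summand would itself be sum decomposable, violating maximality. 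The remaining summands are then determined by recursion on the suffix $w(m+1)\cdots w(n)$.

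The only mildly subtle point is reconciling the non-uniqueness of the $p_i$ in Proposition~\ref{prop:substitution decomposition} when $k=2$ with the uniqueness claim here; that is exactly what the maximality condition resolves, via the minimum-prefix characterization of $p_1$ just described.
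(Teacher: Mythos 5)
Your proof is correct and follows the same route the paper intends: the paper states this corollary without proof as an immediate consequence of Proposition~\ref{prop:substitution decomposition}, and your argument simply fills in the details (the case split on $k\in\{2\}\cup\{4,5,\dots\}$, mutual exclusivity via uniqueness of $v$, and the iteration/minimum-prefix argument for the maximal sum or skew decomposition). Everything checks out, including the slightly delicate points about excluding $k=1$ and the uniqueness of the maximal decomposition into indecomposables.
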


\section{Structural properties of interval posets}\label{sec:structural}

In this section, we describe the structural properties of the interval poset $\poset(w)$ of a permutation, and of $\hatposet(w)$ where relevant. We begin with a notion of planarity, such as was formalized by Lakser and presented by Quackenbush in \cite{quackenbush}: a poset is \emph{planar} if it has a planar Hasse diagram. The interval poset depicted in Figure~\ref{fig:ex poset(43187562)} is planar, and indeed this is no coincidence. Unlike the other structural properties that we explore below, planarity depends on finding a specific embedding of the poset in the plane. In fact, we can explicitly describe an embedding of the poset $\poset(w)$ that always produces a planar diagram.

We will use a straightforward property of intervals, here and subsequently, from 1980.

\begin{lemma}[F\"oldes \cite{foldes}]\label{lem:interval intersection}
Suppose that $I$ and $J$ are intervals of the permutation $w$. Then $I \cap J$ and $I \setminus J$ are also both intervals of $w$. Moreover, if $I \cup J \neq \emptyset$, then $I \cap J$ is an interval of $w$.
\end{lemma}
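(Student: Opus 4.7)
The plan is to represent each interval $I$ of $w$ by the pair $(\mathrm{Pos}(I), \mathrm{Val}(I))$ consisting of its position set and its value set, each of which is an integer interval of the same length, with $w$ giving a bijection from $\mathrm{Pos}(I)$ onto $\mathrm{Val}(I)$. Since $w$ is a bijection, each set-theoretic operation on the value sides $I$ and $J$ corresponds to the same operation on the position sides, so the lemma reduces to a couple of simple facts about two pairs of integer intervals tied together by $w$.

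For $I \cap J$: both $\mathrm{Val}(I) \cap \mathrm{Val}(J)$ and $\mathrm{Pos}(I) \cap \mathrm{Pos}(J)$ are intersections of integer intervals, hence integer intervals themselves, and a position $t$ lies in $\mathrm{Pos}(I) \cap \mathrm{Pos}(J)$ if and only if $w(t) \in \mathrm{Val}(I) \cap \mathrm{Val}(J)$. So the two sides have equal cardinality and are bijected by $w$, which makes $I \cap J$ an interval of $w$. The ``moreover'' clause, read in its natural form as $I \cup J$ being an interval whenever $I \cap J \neq \emptyset$, is the parallel statement: two overlapping integer intervals have union an integer interval on both the value side and the position side, and the bijections on $I$ and on $J$ glue along $I \cap J$ into a bijection on $\mathrm{Pos}(I) \cup \mathrm{Pos}(J) \to \mathrm{Val}(I) \cup \mathrm{Val}(J)$.

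For $I \setminus J$ I would run a case analysis on the relative configuration of $\mathrm{Val}(I)$ and $\mathrm{Val}(J)$: disjoint, nested, or crossing. The disjoint case gives $I \setminus J = I$ trivially, and the case $I \subseteq J$ gives $I \setminus J = \emptyset$. In the crossing case $\mathrm{Val}(I) \setminus \mathrm{Val}(J)$ abuts an endpoint of $\mathrm{Val}(I)$ and is therefore an integer interval; the same configuration transfers to the position side through $w$, yielding a matching position interval and hence an interval of $w$. The step I expect to be the main obstacle — and the one worth pinning down against the intended reading of Földes' statement — is the configuration in which $\mathrm{Val}(J) \subsetneq \mathrm{Val}(I)$ with $\mathrm{Val}(J)$ strictly interior to $\mathrm{Val}(I)$, since there $\mathrm{Val}(I) \setminus \mathrm{Val}(J)$ splits into two integer intervals and the bare claim fails; so I would need to confirm that the hypotheses on $I$ and $J$ used in the sequel (crossing, or $I \cap J$ sitting at a common endpoint) exclude precisely this case.
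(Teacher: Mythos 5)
The paper offers no proof of this lemma at all---it is imported verbatim from F\"oldes \cite{foldes}---so there is no internal argument to compare yours against; I will assess your proposal on its own terms. Your handling of $I \cap J$ and of the ``moreover'' clause is correct and is the standard argument: since $w$ is a bijection, a position $t$ lies in $\mathrm{Pos}(I)$ exactly when $w(t) \in \mathrm{Val}(I)$, so Boolean operations on the value intervals are mirrored exactly on the position intervals, and an intersection of two integer intervals (or a union of two overlapping ones) is again an integer interval. You are also right to read the ``moreover'' clause as ``if $I \cap J \neq \emptyset$ then $I \cup J$ is an interval''; as printed, with $I \cup J \neq \emptyset$ as hypothesis and $I \cap J$ in the conclusion, it merely repeats the first clause and is surely a typo. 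The reservation you flag about $I \setminus J$ is not a loose end but a genuine defect of the statement as written: if $\mathrm{Val}(J)$ is strictly interior to $\mathrm{Val}(I)$, then $I \setminus J$ is not an interval of values at all---for instance $w = 1234$, $I = [1,4]$, $J = [2,3]$ gives $I \setminus J = \{1,4\}$---so the clause needs an extra hypothesis such as $J \not\subseteq I$, under which your case analysis (disjoint, crossing, $I \subseteq J$) is exhaustive and each case goes through. Since the rest of the paper only ever invokes the intersection and union clauses, this gap in the quoted statement has no downstream consequences, but your proof correctly cannot close the set-difference claim in the generality in which it is asserted.
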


\begin{theorem}\label{thm:planar}
For any permutation $w$, the posets $\poset(w)$ and $\hatposet(w)$ are planar.
\end{theorem}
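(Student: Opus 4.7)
The plan is to induct on $n = |w|$ using the substitution decomposition (Corollary~\ref{cor:permutations are inflations or sums}), strengthening the statement inductively: $\poset(w)$ has a planar embedding in which the minima $\{1\}, \ldots, \{n\}$ appear on the boundary of the outer face in left-to-right order. This strengthening is needed so that smaller embeddings can be glued together at their minima. The base cases $n \le 2$ are trivial.

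In the inflation case $w = v[p_1,\ldots,p_k]$ with $v$ simple and $k \ge 4$, the first step is to show that the proper intervals of $w$ are precisely the intervals contained in a single block $V_i$ (in bijection with intervals of $p_i$). Any interval of $w$ whose positions span blocks $p_i,\ldots,p_j$ with $i < j$ must fully contain the middle blocks $p_{i+1},\ldots,p_{j-1}$; for its values to form an interval, the set $\{v(i),v(i+1),\ldots,v(j)\}$ must be an interval in $[k]$, i.e.\ a proper interval of $v$, which simplicity of $v$ with $k \ge 4$ forbids. Consequently $\poset(w)$ consists of the root $[1,n]$ with child subtrees $\poset(p_1), \ldots, \poset(p_k)$. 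By the inductive hypothesis each $\poset(p_i)$ has a planar embedding with its minima on the outer face in order; placing these embeddings side by side in block order and joining the root above to each subtree root yields the required planar embedding.

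In the sum-decomposable case $w = p_1 \oplus \cdots \oplus p_k$, the intervals of $w$ are the within-block intervals together with the ``spanning'' intervals $V_i \cup \cdots \cup V_j$ for $1 \le i \le j \le k$. The spanning intervals form a subposet isomorphic to $\poset(12\cdots k)$, with the block-roots $V_i$ as its minima. One obtains a planar drawing of $\poset(12\cdots k)$ directly by placing $[i,j]$ at coordinates $((i+j)/2,\, j-i)$: every element at level $j-i$ has cover edges only to $[i,j-1]$ and $[i+1,j]$ at level $j-i-1$, and a direct check shows these short diagonals connect to immediate neighbours and never cross. Attaching the inductive planar embedding of each $\poset(p_i)$ below its corresponding minimum $V_i$ then gives the embedding of $\poset(w)$. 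The skew-decomposable case reduces immediately to the sum-decomposable case by Lemma~\ref{lem:reverse}.

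The main obstacle is the interval classification in the inflation case. Ruling out multi-block intervals of $w$ other than $[1,n]$ itself requires careful attention to how an interval can partially overlap the endpoint blocks of its span; the key observation, supported by Lemma~\ref{lem:interval intersection}, is that whether the endpoint blocks are fully or only partially covered, the resulting value-connected union forces a proper interval on $v$ in the block-index coordinates, contradicting simplicity when $k \ge 4$. Once this classification is pinned down, the side-by-side gluing of planar pieces is routine, and the ``minima on outer face in order'' invariant is preserved by construction.
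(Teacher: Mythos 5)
Your proof is correct in outline but takes a genuinely different route from the paper's. The paper argues directly on the canonical Hasse diagram: assuming two cover edges cross, the endpoint inequalities forced by the canonical embedding together with Lemma~\ref{lem:interval intersection} produce a nonempty interval $I\cap J$ lying strictly between the endpoints of one of the crossing edges, contradicting that it was a cover relation. That argument is local, short, and needs no decomposition theory. Your induction on the substitution decomposition (Corollary~\ref{cor:permutations are inflations or sums}) costs more work but buys more: it essentially re-derives the structural description of $\poset(w)$ --- a root over disjoint block posets in the simple-inflation case, and an argyle poset with block posets hung from its minima in the (skew-)sum case --- which the paper only establishes later, in the proof of Theorem~\ref{thm:interval poset characterization}. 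One small bookkeeping point: for the gluing you also need the unique maximal element of each sub-embedding to sit on the outer face so the parent can reach it; your construction does arrange this, but your stated inductive invariant (minima on the outer face, in order) does not record it, and it should.

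There is one step that fails as written. In the inflation case you rule out multi-block intervals by saying that a value-interval spanning blocks $i<\cdots<j$ forces $\{v(i),\ldots,v(j)\}$ to be a \emph{proper} interval of $v$, which simplicity forbids. When $[i,j]=[1,k]$ that set is all of $[k]$, which is not a proper interval, so simplicity forbids nothing; you must still exclude a proper interval of $w$ that meets every block while containing only part of the first and last positional blocks. Your closing paragraph acknowledges the endpoint-block subtlety but resolves it again via ``a proper interval of $v$,'' which does not cover this case. The fix is one sentence: a simple $v\in\mf{S}_k$ with $k\ge 4$ satisfies $v(1),v(k)\notin\{1,k\}$ (otherwise the remaining $k-1$ positions form a proper interval), so the fully contained middle blocks already carry the values $1$ and $n$, and any value-interval containing both must be $[1,n]$ itself. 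With that added, your argument is complete.
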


\begin{proof}
It is enough to prove the result for $\poset(w)$. Consider the canonical Hasse diagram of $\poset(w)$, and suppose that it is not planar. Then there must be a configuration of covering relations such as this.
$$\begin{tikzpicture}
\foreach \x in {(0,0), (2,0), (0,2), (2,2)} {\fill \x circle (2pt);}
\draw (0,0) -- (2,2);
\draw (0,2) -- (2,0);
\draw (0,0) node[left] {$A = [a_1,a_2]$};
\draw (0,2) node[left] {$I = [i_1,i_2]$};
\draw (2,0) node[right] {$B = [b_1,b_2]$};
\draw (2,2) node[right] {$J = [j_1,j_2]$};
\end{tikzpicture}$$
Because $A \subset J$ and $B \subset I$, we have $j_1 \le a_1 \le a_2 \le j_2$ and $i_1 \le b_1 \le b_2 \le i_2$. The embedding of the canonical Hasse diagram means that the set $\{A,B\}$ is an antichain, and $a_1 < b_1$ and $a_2 < b_2$. Similarly, $i_1 < j_1$ and $i_2 <j_2$. Putting this information together, we have
$$i_1 < j_1 \le a_1 < b_1 \le b_2 \le i_2 < j_2,$$
with $a_2 \in [a_1,b_2)$. By Lemma~\ref{lem:interval intersection}, we have $I \cap J = [j_1,i_2] \in \poset(w)$. But then $B \subset I$ is not a covering relation, because $B \subsetneq [j_1,i_2] \subsetneq I$, which is a contradiction.
\end{proof}

We now examine whether $\hatposet(w)$ belongs to several well-known poset classes. Once again, the reader is referred to \cite{ec1} for definitions and further details.

We start with lattices. In fact, this result follows from Theorem~\ref{thm:planar} and a theorem of Lakser (see \cite{quackenbush}), but we present a proof specific to the interval poset here, to highlight how the lattice structure works in this context.

\begin{theorem}\label{thm:lattice}
For any permutation $w$, the poset $\hatposet(w)$ is a lattice.
\end{theorem}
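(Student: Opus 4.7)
The plan is to establish the lattice property by showing that every pair of elements in $\hatposet(w)$ has a meet, and then invoking the standard observation that a finite meet-semilattice with a maximum element is automatically a lattice. Since $\hatposet(w)$ is finite and carries the unique maximum $[1,n]$, this reduces the problem entirely to constructing binary meets.

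For the meets, I would argue directly using F\"oldes's Lemma~\ref{lem:interval intersection}. Given two elements $I, J \in \hatposet(w)$, if either equals $\widehat{0}$ then their meet is clearly $\widehat{0}$. Otherwise $I$ and $J$ are honest intervals of $w$, and the lemma guarantees that $I \cap J$ is itself an interval of $w$ (empty or not). When $I \cap J \neq \emptyset$ it lies in $\poset(w) \subset \hatposet(w)$, and when $I \cap J = \emptyset$ it is represented by the adjoined $\widehat{0}$. In either case, because the order on $\hatposet(w)$ is set containment inherited from the boolean algebra on $[n]$, the interval $I \cap J$ (or $\widehat{0}$) is automatically the greatest lower bound of $I$ and $J$ in $\hatposet(w)$. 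So meets exist.

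To obtain joins from this, I would point out that for any $I, J \in \hatposet(w)$ the set of common upper bounds is nonempty, since $[1,n] \supseteq I \cup J$. Taking the meet of all such common upper bounds (iteratively, using binary meets and finiteness) produces the least common upper bound of $I$ and $J$, which is the join. Thus every pair of elements has both a meet and a join, and $\hatposet(w)$ is a lattice.

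I do not expect a real obstacle here: the main content is just that F\"oldes's lemma is precisely the closure property of the underlying family of intervals under intersection that one needs in order for meets in the boolean algebra to restrict to meets in $\hatposet(w)$. The only thing worth double-checking in writing up the proof is the edge case in which $I \cap J$ is empty, which is exactly the reason the bottom element $\widehat{0}$ was adjoined in Definition~\ref{defn:interval poset} in the first place.
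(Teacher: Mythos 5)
Your proposal is correct and follows essentially the same route as the paper: both construct the meet of $I$ and $J$ as $I \cap J$ (which is an interval by Lemma~\ref{lem:interval intersection}, or $\widehat{0}$ if empty) and then conclude via the standard fact that a finite meet-semilattice with a maximum element is a lattice. The only cosmetic difference is that you spell out how joins arise as meets of common upper bounds, whereas the paper cites \cite[Proposition 3.3.1]{ec1} for that step.
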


\begin{proof}
Consider two intervals $I$ and $J$ in $w$. We will construct their unique greatest lower bound (meet, denoted $\wedge$) and unique least upper bound (join, denoted $\vee$). 

Consider $I \cap J$. Anything less than both $I$ and $J$ in $\hatposet(w)$ must also be less than $I \cap J$ in $\hatposet(w)$. Because $I$ and $J$ are intervals, their intersection must also be an interval. If this set is empty, then $I \wedge J = \widehat{0}$. Suppose, now, that $I \cap J$ is not empty. As argued in the proof of Theorem~\ref{thm:planar}, this $I \cap J$ is an interval in $w$. Therefore, $I \cap J \in \poset(w)$ is the meet of $I$ and $J$.

This shows that $\hatposet(w)$ is a meet semi-lattice. Because the poset $\hatposet(w)$ also has a maximal element, it is in fact a lattice \cite[Proposition 3.3.1]{ec1}.
\end{proof}

The lattice $\hatposet(w)$ is clearly atomic, by construction, but it is not always complemented as we see in the following example.

\begin{example}
Let $w = 123$. There is no element $p \in \hatposet(w)$ such that $p \vee \{2\} = [1,3]$ and $p \wedge \{2\} = \widehat{0}$. The poset $\hatposet(w)$ is shown in Figure~\ref{fig:poset(123)}, with $\{2\}$ circled in that figure. Note the relationship between that figure and Figure~\ref{fig:ex poset(123)}.
\end{example}

\begin{figure}[htbp]
\begin{tikzpicture}[scale=.75]
\fill (0,0) circle (3pt) coordinate (0);
\fill (-2,2) circle (3pt) coordinate (1);
\fill (0,2) circle (3pt) coordinate (2);
\fill (2,2) circle (3pt) coordinate (3);
\fill (-1,4) circle (3pt) coordinate (12);
\fill (1,4) circle (3pt) coordinate (23);
\fill (0,6) circle (3pt) coordinate (123);
\foreach \x in {1,2,3} {\draw (0) -- (\x);}
\foreach \x in {12,23} {\draw (123) -- (\x);}
\foreach \x in {1,2} {\draw (12) -- (\x);}
\foreach \x in {2,3} {\draw (23) -- (\x);}
\draw (2) circle (5pt);
\end{tikzpicture}
\caption{The lattice $\hatposet(123)$, with canonical Hasse diagram, which is not complemented. The element $\{2\}$ has been circled.}\label{fig:poset(123)}
\end{figure}
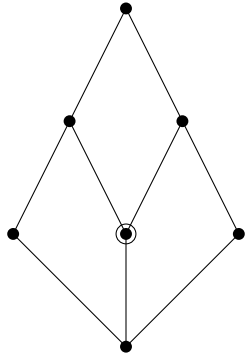

We now turn to classifying modular interval posets for permutations; that is, those interval posets in which $a \le b$ implies $a \vee (x \wedge b) = (a \vee x) \wedge b$ for every $x$. In fact, these are quite a small class. We will use the fact that modular posets can be characterized by avoidance of the sublattice $N_5$, known as the ``pentagon lattice.''

\begin{theorem}\label{thm:modular}
An interval poset $\hatposet(w)$ is modular if and only if $w$ is a simple permutation.
\end{theorem}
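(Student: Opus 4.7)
The plan is to use the standard characterization that a lattice is modular if and only if it contains no sublattice isomorphic to the pentagon $N_5$, as the excerpt notes just before the theorem.

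For the easy direction, suppose $w$ is simple. Then the only intervals of $w$ are the singletons and $[1,n]$, so $\hatposet(w)$ has rank $2$. Since every copy of $N_5$ contains a chain of length $3$, no pentagon sublattice can exist, and $\hatposet(w)$ is modular.

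For the converse I would assume $w$ is not simple and construct an explicit pentagon in $\hatposet(w)$. Since $w$ has a proper interval $I = [i_1,i_2]$ with $2 \le i_2 - i_1 + 1 \le n-1$, either $i_2 < n$ or $i_1 > 1$. In the first case set $A = \{i_1\}$, $C = I$, and $B = \{i_2 + 1\}$; in the second case set $A = \{i_2\}$, $C = I$, and $B = \{i_1 - 1\}$. Let $K = A \vee B$ in $\hatposet(w)$. I claim the five elements $\widehat{0}, A, B, C, K$ form a pentagon sublattice.

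The central step, and the point I expect to be the main obstacle, is showing $A \vee B = C \vee B = K$. Every element of $\poset(w)$ is an interval of values, so $K$ must contain every integer between $i_1$ and $i_2+1$ (or between $i_1-1$ and $i_2$), and in particular $K \supsetneq C$. Thus $K$ already contains $C \cup B$, giving $K \ge C \vee B$, while the reverse inequality $K = A \vee B \le C \vee B$ is automatic from $A \subset C$. The remaining verifications are routine: $A \wedge B$ and $C \wedge B$ are both $\widehat{0}$ by disjointness, the chain $\widehat{0} < A < C < K$ is immediate, and $B$ is incomparable to both $A$ and $C$ because $B$ is a singleton disjoint from each of them and $C$ has at least two elements. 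Hence $\{\widehat{0}, A, B, C, K\}$ is closed under the lattice operations of $\hatposet(w)$ and is isomorphic to $N_5$, so $\hatposet(w)$ is not modular. The only real subtlety is choosing $B$ on the correct side of $I$ so that the value-interval generated by $A$ and $B$ engulfs $C$, which is why the two sub-cases are needed; no appeal to the substitution decomposition of Corollary~\ref{cor:permutations are inflations or sums} is necessary.
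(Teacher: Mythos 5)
Your proof is correct and follows essentially the same strategy as the paper: both directions hinge on the $N_5$ characterization, with the reverse implication handled by the rank-$2$ observation and the forward implication by exhibiting an explicit pentagon built around a proper interval. The only real difference is that the paper tops its pentagons with $[1,n]$ (which forces a short three-stage case analysis over where the proper interval sits relative to $1$ and $n$), whereas your local join $K = A \vee B$ with the adjacent singleton gives a single uniform construction.
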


\begin{proof}
First consider $w \in \mf{S}_n$ for which $\hatposet(w)$ is modular. Suppose that $w$ has a proper interval $I$ containing $1$. Because $I$ is proper, $n \not\in I$. Then the poset $\{\widehat{0}, \{1\}, \{n\}, I, I \vee \{n\} = [1,n] \}$ is isomorphic to the pentagon lattice $N_5$, meaning that $\hatposet(w)$ would not be modular. Therefore there can be no such interval $I$. A similar argument shows that $w$ has no proper intervals containing $n$. Now suppose that $w$ has a proper interval $I$ containing $j$ for some $j \in [2,n-1]$ (and necessarily $1 \not\in I$). Then the poset $\{\widehat{0}, \{1\}, \{j\}, I, \{1\} \vee I = [1,n]\}$ is isomorphic to $N_5$, and so $\hatposet(w)$ would not be modular. Thus there are no proper intervals in $w$, so $w$ is simple. 

Now suppose that $w \in \mf{S}_n$ is simple. The poset $\hatposet(w)$ has rank $2$: a minimal element, a maximal element, and $n$ atoms/coatoms. This certainly contains no sublattice isomorphic to $N_5$, and therefore it is modular.
\end{proof}

Before classifying distributive interval posets for permutations, we need some terminology.

\begin{definition}\label{defn:fruitful}
Fix a permutation $w$ and consider its interval poset $\poset(w)$. An element $I \in \hatposet(w)$ is  \emph{fruitful} if $I$ covers at least $3$ elements. 
\end{definition}

Fruitful elements highlight a particular phenomenon among the intervals of a permutation, giving a restatement of the substitution decomposition \cite{albert atkinson}.

\begin{lemma}[cf. \cite{albert atkinson}]\label{lem:fruitful elements are simple inflations}
Fix a fruitful element $I \in \poset(w)$, covering $\{H_1, \ldots, H_k\}$ for some $k \ge 3$. The interval $I$, as it appears in the one-line notation for $w$, is order isomorphic to the inflation of a simple permutation $v \in \mf{S}_k$. Moreover, the intervals $\{H_i\}$ are pairwise disjoint.
\end{lemma}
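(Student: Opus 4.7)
The plan is to invoke Lemma~\ref{lem:poi} to reduce to the case $I = [1,n]$ and $w \in \mf{S}_n$, so that the $H_i$ become the maximal proper intervals of $w$, and then apply the trichotomy of Corollary~\ref{cor:permutations are inflations or sums}. To eliminate the sum-decomposable case, write $w = p_1 \oplus \cdots \oplus p_\ell$ as a maximal direct sum ($\ell \ge 2$, each $p_j$ sum-indecomposable), and let $P_j \subseteq [1,n]$ denote the value-interval occupied by $p_j$. I would show that any proper interval $J$ of $w$ meeting at least two blocks must in fact contain every block it meets: a partial overlap at block $a$ would be a strict, nontrivial suffix of $p_a$ whose values form the top of $P_a$ (so that the union with $P_{a+1}$ is contiguous), and this would force $p_a$ to be sum-decomposable. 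Consequently such $J$ equals $P_a \oplus \cdots \oplus P_b$ for some $\{a,\ldots, b\} \subsetneq \{1,\ldots,\ell\}$, and the two maximal such intervals are $P_1 \oplus \cdots \oplus P_{\ell-1}$ and $P_2 \oplus \cdots \oplus P_\ell$. Thus $I$ has exactly two covers, contradicting fruitfulness. The skew-decomposable case is symmetric, also yielding only two covers.

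The remaining case is $w = v[p_1,\ldots,p_k]$ with $v \in \mf{S}_k$ simple and $k \ge 4$, which immediately gives the desired inflation structure. Each block $P_j$ is a proper interval, and the $P_j$ partition $[1,n]$, so pairwise disjointness is automatic. What remains is to show that every proper interval of $w$ is contained in some single $P_j$; once this is established, the $H_i$ are forced to be precisely the $P_j$. This is the main technical step and principal obstacle.

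For this step, suppose $J$ is a proper interval of $w$ and let $\{a, a+1, \ldots, b\}$ be the set of block-indices whose positions meet $J$. Then $P_{a+1},\ldots,P_{b-1} \subseteq J$, while $J \cap P_a$ and $J \cap P_b$ are intervals of $w$ by F\"oldes's Lemma (Lemma~\ref{lem:interval intersection}). Writing the value-set of $J$ as $(J \cap P_a) \cup P_{a+1} \cup \cdots \cup P_{b-1} \cup (J \cap P_b)$ and demanding that it be a value-interval of $[1,n]$, I would argue by a value-gap analysis: any block whose value range is sandwiched between components of $J$ must itself have its values inside $J$, and hence (being outside $\{a,\ldots,b\}$ position-wise) contradicts $J$ being an interval. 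This forces the value-set $v(\{a,a+1,\ldots,b\})$ to be consecutive, so that $\{a,a+1,\ldots,b\}$ is both position-consecutive and value-consecutive in $v$, i.e., an interval of $v$. Since $v$ is simple with $k \ge 4$, the only such intervals are singletons, so $a = b$ and $J \subseteq P_a$, completing the proof.
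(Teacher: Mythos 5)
Your proposal is correct in outline but takes a genuinely different route from the paper. The paper's proof is short and direct: it takes the covered elements $H_1,\ldots,H_k$ to be the maximal proper intervals of $I$, uses F\"oldes's lemma (Lemma~\ref{lem:interval intersection}) to note that two overlapping $H_j$'s would have a union that is again an interval --- contradicting maximality or forcing $k=2$ --- so the $H_i$ are disjoint, and then contracts each $H_i$ to a single letter to obtain $v$, which is simple precisely because the $H_i$ are maximal (any proper interval of $v$ would pull back to an interval strictly between some $H_i$ and $I$). You instead reduce to $I=[1,n]$ via Lemma~\ref{lem:poi} and run the trichotomy of Corollary~\ref{cor:permutations are inflations or sums}, killing the sum- and skew-decomposable cases by showing they yield exactly two covers, and then verifying by hand that in the simple-inflation case the blocks $P_j$ are exactly the maximal proper intervals. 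Your route is heavier (it leans on Proposition~\ref{prop:substitution decomposition}, which is the very result this lemma is meant to restate in poset language, so it is closer to circular in spirit though not in logic), but it buys a sharper byproduct: a complete description of \emph{all} proper intervals of an inflation of a simple permutation, not just the covers of $I$.

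One step needs repair. At the end you assert that since $v$ is simple, ``the only such intervals are singletons, so $a=b$.'' But $[1,k]$ is also an interval of $v$, and a priori a proper interval $J$ of $w$ could meet every block (containing blocks $2,\ldots,k-1$ entirely while clipping blocks $1$ and $k$). To exclude this you must observe that a simple permutation with $k\ge 4$ cannot have the value $1$ or the value $k$ at either end of its one-line notation (otherwise deleting that entry leaves a proper interval), so the minimum- and maximum-value blocks sit at interior positions, are therefore wholly contained in $J$, and force $J=[1,n]$, contradicting properness. This is exactly the observation the paper itself deploys in the proof of Lemma~\ref{lem:fruitful segments have just one parent and are disjoint}; with it inserted, your argument closes.
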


\begin{proof}
The maximal proper intervals in $I$ are $H_1, \ldots, H_k$, which we can assume to be listed in increasing order. If $H_j$ and $H_{j+1}$ intersect nontrivially, then $H_j \cup H_{j+1}$ would also be an interval in $I$, contradicting either maximality of the intervals or the value of $k$. Thus all elements of $H_j$ are less than all elements of $H_{j+1}$. Consider $w|_I$, the word obtained by restricting the one-line notation of $w$ to the letters in $I$. For each $j$, replace the entire interval $H_j$ in $w|_I$ by the letter $j$, and let the resulting permutation of $[k]$ be called $v$. Then $w|_I$ is the inflation $v[H_1, \ldots, H_k]$. The permutation $v$ is simple because it has no proper intervals.
\end{proof}

This result gives an immediate restriction on the possible structure of $\poset(w)$, because $\mf{S}_3$ has no simple permutations. Thus, in fact, we could revise Definition~\ref{defn:fruitful} to say that an element is fruitful if it covers at least four elements. We will use that bound henceforth.

\begin{corollary}\label{cor:no 3 kids}
No element of $\poset(w)$ covers exactly three elements.
\end{corollary}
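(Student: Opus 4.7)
The plan is to derive the corollary by direct contradiction from Lemma~\ref{lem:fruitful elements are simple inflations} together with the observation recorded in the introduction that $\simple(3) = 0$. Suppose, toward contradiction, that some element $I \in \poset(w)$ covers exactly three elements $H_1, H_2, H_3$. Then $I$ is fruitful in the sense of Definition~\ref{defn:fruitful} (using the $k \ge 3$ threshold, which is what the lemma's hypothesis requires), so Lemma~\ref{lem:fruitful elements are simple inflations} applies with $k = 3$.

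The lemma then yields a simple permutation $v \in \mf{S}_3$ such that $w|_I$ is order-isomorphic to the inflation $v[H_1, H_2, H_3]$. But no such $v$ exists: every element of $\mf{S}_3$ has a proper interval of size $2$, so $\simple(3) = 0$, as noted immediately following the definition of $\simple(n)$ in Section~\ref{sec:intro}. This contradicts the assumption that $I$ covers exactly three elements.

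The only real content of the corollary is the observation that the case $k = 3$ is vacuous in Lemma~\ref{lem:fruitful elements are simple inflations}, and there is no genuine obstacle to overcome: the work was done in the proof of that lemma, which shows that a cover of size $k$ forces an inflation by a simple permutation in $\mf{S}_k$. Consequently the proof can be written in essentially two sentences, and it justifies the remark (made just before the corollary is stated) that the threshold in Definition~\ref{defn:fruitful} could be raised from $3$ to $4$ without changing which elements are fruitful.
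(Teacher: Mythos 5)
Your proof is correct and follows exactly the paper's (implicit) argument: the corollary is justified in the text immediately preceding it by appealing to Lemma~\ref{lem:fruitful elements are simple inflations} with $k=3$ together with the fact that $\simple(3)=0$. Nothing is missing, and your observation that the $k=3$ case of the lemma is vacuous is precisely the content the paper intends.
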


Because simple permutations have no proper intervals, the intervals $H_i$ and $H_{i+1}$ cannot appear consecutively in $w$, yielding the following handy result.

\begin{lemma}\label{lem:fruitful segments have just one parent and are disjoint}
Fix a permutation $w$. Suppose that $I \in \poset(w)$ is a fruitful element, covering $\{H_1, \ldots, H_k\}$ for $k \ge 4$. Then $I$ is the only element to cover each $H_i$.
\end{lemma}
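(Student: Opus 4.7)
The plan is to invoke Lemma~\ref{lem:isolated cover}: to prove that $I$ is the unique interval covering $H_i$, it suffices to check that every interval $J$ with $H_i \subsetneq J$ also contains $I$. So I would suppose for contradiction that some interval $J$ satisfies $H_i \subsetneq J$ and $I \not\subseteq J$, aiming to contradict the simplicity of the permutation $v \in \mf{S}_k$ supplied by Lemma~\ref{lem:fruitful elements are simple inflations}, for which $I$ appears in $w$ as the inflation $v[H_1,\ldots,H_k]$.

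The first step is to pin down $J \cap I$. By Lemma~\ref{lem:interval intersection} it is an interval of $w$; it contains $H_i$; and it is properly contained in $I$ since $I \not\subseteq J$. Because $I$ covers $H_i$ in $\poset(w)$, no interval sits strictly between them, so $J \cap I = H_i$. Writing the position-ranges of $I$, $J$, and $H_i$ as $[p_1,p_m]$, $[q_1,q_r]$, and $[p_a,p_b]$, this identity forces $[q_1,q_r] \cap [p_1,p_m] = [p_a,p_b]$, while $J \not\subseteq I$ forces $[q_1,q_r]$ to extend outside $[p_1,p_m]$. If $[q_1,q_r]$ extended off both ends, then $[p_a,p_b]$ would equal $[p_1,p_m]$ and hence $H_i = I$, contradicting $H_i \subsetneq I$. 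So $J$ spills off exactly one side of $I$ and $H_i$ is flush with that edge: $H_i$ is either the leftmost or the rightmost positional segment of $I$.

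A parallel analysis in value-space writes $J = [c,d]$, $I = [\alpha,\beta]$, and $H_i = [h,h']$. The identity $J \cap I = H_i$, together with $J$ being a value-interval and $J \setminus I \ne \emptyset$, forces either $h = \alpha$ (so $H_i = H_1$) or $h' = \beta$ (so $H_i = H_k$), since extras on both sides of $H_i$ would force $J \supseteq I$. Identifying $H_{v(j)}$ with the $j$-th positional segment of $I = v[H_1,\ldots,H_k]$, the combined positional and value conclusions say $v(1) \in \{1,k\}$ or $v(k) \in \{1,k\}$. In any of these four scenarios the other $k-1$ positions of $v$ contain a consecutive range of values (either $[1,k-1]$ or $[2,k]$), which is a proper interval of $v$ of size $k-1 \ge 3$. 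This contradicts simplicity of $v$.

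The main subtlety I anticipate is keeping the positional and value bookkeeping cleanly separated. The paper reserves ``interval'' for value-intervals, but the constraints on $J$ live in both spaces: positions govern how $J$ can stick out of $I$, while values govern which $H_j$ must sit next to the extra portion of $J$. Aligning the two pictures and then reading the conclusion through the recipe $v[H_1,\ldots,H_k]$ to constrain $v(1)$ or $v(k)$ is the step that calls for the most care.
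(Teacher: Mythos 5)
Your proof is correct, and it reaches the paper's punchline by a noticeably different road. The paper's argument leans twice on Theorem~\ref{thm:planar}: planarity of the canonical Hasse diagram is used first to rule out a second parent for any $H_i$ other than the value-extremal ones $H_1$ and $H_k$, and then again to derive the final contradiction once one observes that a simple permutation of length $k\ge 4$ cannot have $1$ or $k$ at either end. You instead reduce via Lemma~\ref{lem:isolated cover}, use Lemma~\ref{lem:interval intersection} together with the covering relation to force $J\cap I = H_i$ for any hypothetical second parent $J$, and then run the position-range and value-range analyses in parallel to pin $H_i$ to a corner of the simple quotient $v$ --- i.e., to conclude $v(1)\in\{1,k\}$ or $v(k)\in\{1,k\}$ --- before invoking the same property of simple permutations. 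Both proofs ultimately rest on that one fact about simple permutations of length at least $4$; what your route buys is self-containment and precision (the paper's ``which will violate Theorem~\ref{thm:planar}'' is left to the reader, whereas your two-coordinate bookkeeping makes the obstruction explicit), at the cost of being longer than the paper's argument, which gets the value-extremality of $H_i$ for free from the already-established planar embedding. One small remark: the paper's proof also re-derives the pairwise disjointness of the $H_i$ (echoing the lemma's internal label), which your write-up does not address, but that claim is not part of the stated conclusion, so nothing is missing.
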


\begin{proof}
Without loss of generality, assume that the sequence $H_1, \ldots, H_k$ is in increasing order. By Theorem~\ref{thm:planar}, the only intervals from this list that could be covered by something in addition to $I$ are $H_1$ and $H_k$. By Lemma~\ref{lem:fruitful elements are simple inflations}, the letters of $I$ as they appear in $w$ are isomorphic to the inflation of a simple permutation in $\mf{S}_k$ for $k \ge 4$, and neither $1$ nor $k$ can appear at either end of such a permutation. Thus no element can cover $H_1$ without also covering some other $H_i$, which will violate Theorem~\ref{thm:planar}. That the intervals $\{H_i\}$ are pairwise disjoint follows as argued in the proof of Lemma~\ref{lem:fruitful elements are simple inflations}.
\end{proof}

\begin{proposition}\label{prop:fruitful elements aren't distributive}
If $\hatposet(w)$ has a fruitful element, then it is not a distributive lattice.
\end{proposition}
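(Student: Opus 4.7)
The plan is to exhibit the diamond lattice $M_3$ as a sublattice of $\hatposet(w)$; since $M_3$ is not distributive, any lattice containing it as a sublattice must also fail distributivity. Let $I$ be the hypothesized fruitful element, covering $H_1,\ldots,H_k$ for some $k \ge 4$. I will show that the five-element subset $\{\widehat{0},\, H_1,\, H_2,\, H_3,\, I\}$ is such an $M_3$-sublattice.

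The key computation is the pairwise meets and joins of $H_1,H_2,H_3$ in $\hatposet(w)$. By Lemma~\ref{lem:fruitful elements are simple inflations} (equivalently by Lemma~\ref{lem:fruitful segments have just one parent and are disjoint}), the intervals $H_1,\ldots,H_k$ are pairwise disjoint, so $H_i \cap H_j = \emptyset$ and hence $H_i \wedge H_j = \widehat{0}$ for all $i \neq j$. For the joins, note first that $I$ is an interval of $w$ containing each $H_i \cup H_j$, so $H_i \vee H_j \le I$. Conversely, since $I$ covers exactly $\{H_1,\ldots,H_k\}$ in $\poset(w)$, every element of $\poset(w)$ strictly below $I$ must lie below some $H_\ell$. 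If we had $H_i \vee H_j = I'$ with $I' \subsetneq I$, then $I' \subseteq H_\ell$ for some $\ell$, but $I'$ contains the two disjoint intervals $H_i$ and $H_j$, which is impossible. Therefore $H_i \vee H_j = I$.

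Combined with the trivial identities involving $\widehat{0}$ and $I$, these computations show that $\{\widehat{0},\, H_1,\, H_2,\, H_3,\, I\}$ is closed under the meet and join inherited from $\hatposet(w)$, so it forms a sublattice isomorphic to $M_3$. Since a distributive lattice contains no sublattice isomorphic to $M_3$, we conclude that $\hatposet(w)$ is not distributive.

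The only genuinely delicate step is the sharpness of $H_i \vee H_j \le I$; the rest follows from basic definitions and from the already established disjointness of the $H_\ell$. That delicate step in turn reduces to the elementary poset fact that in a finite poset every element strictly below $I$ is $\le$ to some element covered by $I$, combined with the observation that no single $H_\ell$ can contain two of the (disjoint) $H_i$'s.
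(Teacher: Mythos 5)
Your proof is correct and follows essentially the same route as the paper: both exhibit $\{\widehat{0}, H_1, H_2, H_3, I\}$ as a copy of the diamond lattice $M_3$, using the pairwise disjointness of the covered intervals from Lemma~\ref{lem:fruitful elements are simple inflations} to get the meets. Your explicit verification that $H_i \vee H_j = I$ (so that the five elements form a genuine sublattice, not merely a subposet isomorphic to $M_3$) is a detail the paper leaves implicit, and it is a worthwhile addition.
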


\begin{proof}
Let $I \in \hatposet(w)$ be a fruitful element, and $H_1, H_2, H_3$ be three of the elements that it covers. By Lemma~\ref{lem:fruitful elements are simple inflations}, the intervals $H_1$, $H_2$, and $H_3$ are pairwise disjoint. Thus, as discussed in Theorem~\ref{thm:lattice}, their pairwise meets are $\widehat{0}$.
Then $\{I, H_1, H_2, H_3, \widehat{0}\}$ is isomorphic to the diamond lattice $M_3$, and so $\hatposet(w)$ is not distributive.
\end{proof}

The configuration referenced in the proof of Proposition~\ref{prop:fruitful elements aren't distributive} is depicted in Figure~\ref{fig:diamond lattice}.

\begin{figure}[htbp]
\begin{tikzpicture}[scale=.75]
\fill (0,0) circle (3pt) coordinate (0) node[below] {$\widehat{0}$};
\fill (-2,2) circle (3pt) coordinate (1);
\fill (0,2) circle (3pt) coordinate (2);
\fill (2,2) circle (3pt) coordinate (3);
\fill(.5,4) circle (3pt) coordinate (I) node[above] {$I$};
\draw (I) -- (2,3);
\draw (I) -- (2.5,3);
\draw (3.25,3) node {$\ldots$};
\foreach \x in {1,2,3} {\draw[dashed] (0) -- (\x); \draw (I) -- (\x);}
\end{tikzpicture}
\caption{The diamond lattice constructed in the proof of Proposition~\ref{prop:fruitful elements aren't distributive}.}\label{fig:diamond lattice}
\end{figure}
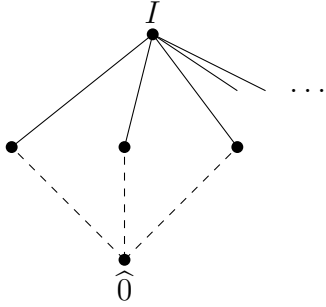

We can now combine Theorem~\ref{thm:modular} and Proposition~\ref{prop:fruitful elements aren't distributive} to (quite strictly!) characterize distributive interval posets for permutations. As with modular lattices, we will use a sublattice characterization: distributive lattices are exactly those avoiding the diamond lattice $M_3$ and the pentagon lattice $N_5$. The result itself is almost disappointingly restrictive, but we include it as a theorem in analogy to Theorems~\ref{thm:planar}, \ref{thm:lattice}, and~\ref{thm:modular}.

\begin{theorem}\label{thm:distributive}
An interval poset $\hatposet(w)$ is distributive if and only if $w \in \mf{S}_1 \cup \mf{S}_2$.
\end{theorem}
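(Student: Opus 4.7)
The plan is to combine the two immediately preceding results with the classical sublattice characterization: a finite lattice is distributive exactly when it contains neither $M_3$ nor $N_5$ as a sublattice. Since distributivity implies modularity, Theorem~\ref{thm:modular} and Proposition~\ref{prop:fruitful elements aren't distributive} together already rule out distributivity for almost every $w$; only a small handful of cases need separate verification.

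For the ``if'' direction I would simply inspect $\hatposet(w)$ for $w$ of length at most $2$. When $|w|=1$, $\hatposet(w)$ is a two-element chain, and when $|w|=2$, its elements are $\widehat{0}, \{1\}, \{2\}, [1,2]$, giving the boolean lattice $B_2$. Both are distributive.

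For the ``only if'' direction I would split on whether $w$ is simple. If $w$ is not simple and has at least three letters, then Theorem~\ref{thm:modular} produces an $N_5$ sublattice inside $\hatposet(w)$, so $\hatposet(w)$ is not modular and hence not distributive. If $w$ is simple with $|w| \ge 3$, then because $\simple(3) = 0$ we actually have $|w| \ge 4$; the top $[1,n]$ of $\poset(w)$ covers all $n \ge 4$ atoms $\{1\},\ldots,\{n\}$ directly, so it is fruitful, and Proposition~\ref{prop:fruitful elements aren't distributive} produces an $M_3$ sublattice, killing distributivity again. The two subcases together show that every $w$ with $|w| \ge 3$ fails the distributive condition, so the distributive interval posets come only from $\mf{S}_1 \cup \mf{S}_2$.

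There is no real obstacle here; the proof is essentially just bookkeeping on top of the structural work already done. The only subtlety worth flagging explicitly in the write-up is the appeal to $\simple(3) = 0$, which ensures that the ``simple'' subcase degenerates to either the trivial $\mf{S}_2$ examples (which are distributive) or the fruitful-top situation (which is not), with no intermediate case to handle.
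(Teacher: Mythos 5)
Your proposal is correct and follows essentially the same route as the paper: distributivity forces modularity, hence simplicity by Theorem~\ref{thm:modular}, and then the top element covering all $n\ge 3$ atoms yields an $M_3$ sublattice (the paper states this directly, while you route it through Proposition~\ref{prop:fruitful elements aren't distributive} and the observation $\simple(3)=0$). The extra bookkeeping you flag is harmless but not needed beyond what the paper already records.
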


\begin{proof}
Suppose that $w \in \mf{S}_n$ and $\hatposet(w)$ is distributive. Distributive lattices are modular, so $w$ must be simple, by Theorem~\ref{thm:modular}. To avoid a sublattice isomorphic to the diamond lattice $M_3$, we need $n < 3$.

It is straightforward to check that $\hatposet(w)$ is distributive for all $w \in \mf{S}_1 \cup \mf{S}_2$.
\end{proof}

\section{Characterizing interval posets}\label{sec:characterizing}

The goal of this section is, in a sense, to learn how to invert the map $w \mapsto \poset(w)$. To that end, we define the following (possibly empty) set.

\begin{definition}\label{defn:interval poset set}
Fix a poset $P$. The permutations whose interval poset is $P$ are the \emph{interval generators} of $P$, and the set of interval generators of $P$ is denoted
$$\interval(P) := \{w : \poset(w) = P\}.$$
\end{definition}

The classification presented in Corollary~\ref{cor:permutations are inflations or sums} gives almost everything necessary for characterizing these posets.

\begin{definition}\label{defn:poset types}
A \emph{dual claw poset} $\Lambda_k$ has a unique maximal element, and $k \ge 4$ minimal elements that are all covered by that maximal element. We use the term \emph{argyle poset} to describe the interval poset of $12\cdots k$ for some $k \ge 2$. A \emph{binary tree poset} is a poset whose Hasse diagram is a tree in which every node has $2$ children except for the minimal elements, and all maximal chains have the same length.
\end{definition}

\begin{figure}[htbp]
\begin{tikzpicture}[scale=.75]
\fill (21.5,6) circle (3pt) coordinate (o);
\fill (20,4) circle (3pt) coordinate (L);
\fill (23,4) circle (3pt) coordinate (R);
\fill (19,2) circle (3pt) coordinate (LL);
\fill (21,2) circle (3pt) coordinate (LR);
\fill (22,2) circle (3pt) coordinate (RL);
\fill (24,2) circle (3pt) coordinate (RR);
\fill (21,0) circle (3pt) coordinate (RLL);
\fill (23,0) circle (3pt) coordinate (RLR);
\draw (LL) -- (L) -- (o) -- (R) -- (RR);
\draw (LR) -- (L);
\draw (RLL) -- (RL) -- (R);
\draw (RLR) -- (RL);
\fill (7,4) circle (3pt) coordinate (0);
\foreach \x in {5,6,7,8,9} {\fill (\x,2) circle (3pt); \draw (\x,2) -- (0);}
\draw (11,0) -- (14,6) -- (17,0);
\draw (12,2) -- (13,0) -- (15,4);
\draw (13,4) -- (15,0) -- (16,2);
\foreach \x in {11,13,15,17} {\fill (\x,0) circle (3pt);}
\foreach \x in {12,14,16} {\fill (\x,2) circle (3pt);}
\foreach \x in {13,15} {\fill (\x,4) circle (3pt);}
\fill (14,6) circle (3pt);
\end{tikzpicture}
\caption{From left to right: the dual claw poset $\Lambda_5$, an argyle poset, and a binary tree poset.}
\label{fig:poset types}
\end{figure}
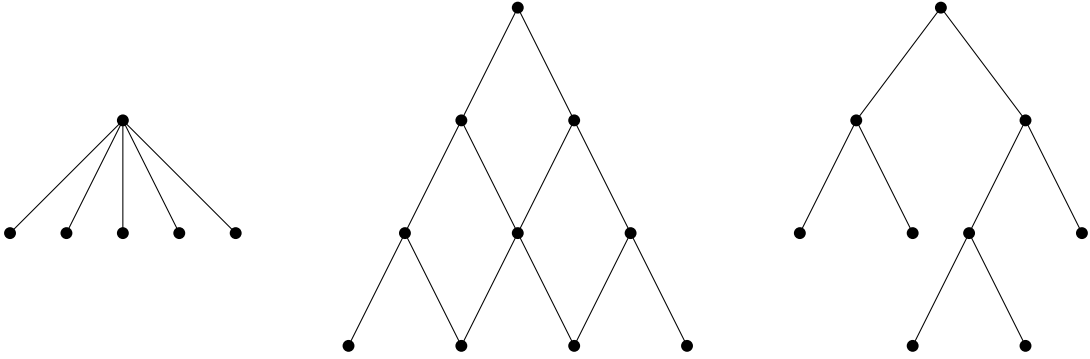

These three classes of posets, illustrated in Figure~\ref{fig:poset types}, are, in fact, interval posets. Moreover, we can describe the generating set $\interval(P)$ in each case.

\begin{proposition}\label{prop:fruitful trees are interval posets}
For any $k \ge 4$, the dual claw poset $\Lambda_k$ is the interval poset for exactly $\simple(k)$ permutations: the simple permutations in $\mf{S}_k$.
\end{proposition}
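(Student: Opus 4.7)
The plan is to prove the two directions of the claim: every simple permutation in $\mathfrak{S}_k$ has interval poset equal to $\Lambda_k$, and conversely every permutation whose interval poset is $\Lambda_k$ is a simple permutation in $\mathfrak{S}_k$. Both directions will follow essentially by unwinding the definitions, with no real obstacle.

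First I would handle the forward direction. Given a simple permutation $v \in \mathfrak{S}_k$, by the definition of simplicity $v$ has no proper intervals, so the intervals of $v$ are exactly the $k$ singletons $\{1\}, \{2\}, \ldots, \{k\}$ together with the full interval $[1,k]$. The canonical labeling then produces a poset with $k$ minimal elements, a unique maximal element $[1,k]$ covering all of them, and nothing else; this is precisely $\Lambda_k$. Since there are $\mathsf{simp}(k)$ simple permutations in $\mathfrak{S}_k$, this already accounts for $\mathsf{simp}(k)$ permutations $w$ with $\mathcal{P}(w) = \Lambda_k$.

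For the reverse direction, suppose $w$ is a permutation with $\mathcal{P}(w) = \Lambda_k$. The minimal elements of $\mathcal{P}(w)$ are exactly the singleton intervals, which correspond to the letters permuted by $w$; since $\Lambda_k$ has $k$ minimal elements, $w \in \mathfrak{S}_k$. Any proper interval of $w$ would yield an element of $\mathcal{P}(w)$ strictly between some singleton and the maximal element $[1,k]$, but $\Lambda_k$ has no such intermediate elements. Hence $w$ has no proper intervals, and therefore $w$ is simple.

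Combining the two directions gives $\interval(\Lambda_k) = \{v \in \mathfrak{S}_k : v \text{ is simple}\}$, which has cardinality $\mathsf{simp}(k)$. The main ``step'' is really just recognizing that the structural data of $\Lambda_k$ (number of minimal elements, absence of intermediate elements) translates directly into the defining properties of a simple permutation of the appropriate size, so no technical lemma is needed beyond the definitions laid out in Section~\ref{sec:basic facts}.
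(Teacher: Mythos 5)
Your proposal is correct and follows essentially the same route as the paper's own proof: both simply unwind the definitions to observe that $\poset(w) = \Lambda_k$ is equivalent to $w \in \mf{S}_k$ having no proper intervals, i.e., being simple. You spell out the two directions more explicitly than the paper's two-sentence argument, but the underlying reasoning is identical.
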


\begin{proof}
The dual claw poset $\Lambda_k$ is the interval poset of a permutation $w \in \mf{S}_k$ if and only if there are no proper intervals in $w$. This is the definition of being a simple permutation.
\end{proof}

Showing that argyle posets are interval posets is similarly easy.

\begin{proposition}\label{prop:argyle posets are interval posets}
The argyle poset with $k \ge 2$ minimal elements is the interval poset for exactly two permutations: $12\cdots k$ and $(12\cdots k)^R = k \cdots 21$.
\end{proposition}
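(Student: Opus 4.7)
The plan is to prove the two inclusions separately. For the easy direction, Lemma~\ref{lem:reverse} yields $\poset(12\cdots k) = \poset(k\cdots 21)$, and since the argyle poset with $k$ minimal elements is by definition $\poset(12\cdots k)$, both of these permutations are interval generators of it. They are distinct whenever $k \ge 2$.

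For the converse, I would take $w \in \mf{S}_k$ with $\poset(w) = \poset(12\cdots k)$ and dispose of $k = 2$ immediately, since $\mf{S}_2 = \{12, 21\}$. For $k \ge 3$, I would first observe that because every $[a,b]$ with $1 \le a \le b \le k$ is an interval of the identity permutation $12\cdots k$, the argyle poset is the full containment poset on those $\binom{k+1}{2}$ intervals; in particular, the only intervals of size $k-1$ inside $[1,k]$ are $[1,k-1]$ and $[2,k]$, and since any proper sub-interval of $[1,k]$ is contained in one of these, the maximal element $[1,k]$ covers exactly the two children $[1,k-1]$ and $[2,k]$.

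Both of these children must therefore also be intervals of $w$. The interval $[1,k-1] \in \poset(w)$ forces the values $\{1,\dots,k-1\}$ to occupy $k-1$ consecutive positions, so the remaining value $k$ must sit at position $1$ or position $k$; analogously, $[2,k] \in \poset(w)$ forces the value $1$ to sit at position $1$ or position $k$. Since $w$ is a bijection, either (i) $w(1)=1$ and $w(k)=k$, or (ii) $w(1)=k$ and $w(k)=1$.

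In case (i), I would iterate on $j$: for each $j \in [k]$, the interval $[1,j]$ lies in $\poset(w)$, so the values $\{1,\dots,j\}$ occupy $j$ consecutive positions; because $w(1)=1$ these positions must be $1,\dots,j$, and a short induction on $j$ yields $w(j) = j$ throughout, hence $w = 12\cdots k$. Case (ii) follows from case (i) applied to $w^R$ via Lemma~\ref{lem:reverse}, giving $w = k\cdots 21$. No step here is deep; the only real care is in verifying that $[1,k]$ has precisely the two children $[1,k-1]$ and $[2,k]$ in the argyle poset, after which propagating the endpoint constraints forces $w$ completely.
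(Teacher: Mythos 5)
Your proof is correct and follows essentially the same strategy as the paper: read off from the (labeled) argyle poset a family of intervals that $w$ is forced to contain, and deduce $w \in \{12\cdots k,\, k\cdots 21\}$. The only difference is in which intervals are used --- the paper invokes the size-$2$ intervals $[j,j+1]$, which force $j$ and $j+1$ to be adjacent in $w$ for every $j$ and settle the matter in one step, whereas you use the coatoms $[1,k-1]$ and $[2,k]$ to pin down $w(1)$ and $w(k)$ and then the prefix chain $[1,j]$ with a short induction; both routes are valid and elementary.
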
 

\begin{proof}
An argyle poset with $k$ minimal elements is the interval poset of a permutation $w \in \mf{S}_k$ if and only if each $j$ and $j+1$ appear consecutively. The only permutations with this property are the identity and its reverse.
\end{proof}

Binary trees arise from the separable permutations that decompose via alternating direct sums and skew sums into exactly two components at each step. The embedding matters here, which was not an issue with $\Lambda_k$ or argyle posets.

\begin{definition}
Fix a binary tree poset $P$ and its embedding, with $k\ge 2$ minimal nodes labeled $1, \ldots, k$. Those nodes should be labeled in increasing order according to a depth-first search that always chooses left edges before right edges. An \emph{alternating depth-first search} of $P$ is a depth-first search that choose left edges before right edges at all choices that are of even (respectively, odd) distance from the maximal element, and right edges before left edges at all choices that are odd (resp., even) distance from the maximal element. The \emph{ADFS} words of $P$ are the two words formed by minimal element labels in the order seen when performing alternating depth-first searches of $P$.
\end{definition}

\begin{example}
The ADFS words of the binary tree poset in Figure~\ref{fig:poset types} are $21534$ and $43512$.
\end{example}

The ADFS words of a binary tree poset are reverses of each other.

\begin{proposition}\label{prop:binary trees are interval posets}
A binary tree poset with $k \ge 2$ minimal elements is the canonical Hasse diagram of the interval posets for exactly two permutations: the ADFS words of the poset.
\end{proposition}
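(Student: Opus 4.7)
The plan is to proceed by induction on $k \ge 2$, the number of minimal elements of the binary tree poset $P$. In the base case $k = 2$, the poset $P$ is a root covering two leaves, and both permutations $12, 21 \in \mathfrak{S}_2$ realize $P$ as their canonical Hasse diagram (using Lemma~\ref{lem:reverse}); these are exactly the two ADFS words of $P$.

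For the inductive step ($k \ge 3$), let $r$ denote the root of $P$, covering subtrees $P_L$ on the leaves $[1,k_L]$ and $P_R$ on the leaves $[k_L+1,k]$. Any $w \in \interval(P)$ realizes $r$ as an element covering exactly two intervals, so Corollary~\ref{cor:permutations are inflations or sums} forces $w = u_L \oplus u_R$ or $w = u_R \ominus u_L$, where $u_L, u_R$ are the restrictions of $w$ to the letter sets $[1,k_L]$ and $[k_L+1,k]$. Lemma~\ref{lem:poi} ensures that $\poset(u_L)$ and $\poset(u_R)$ have canonical Hasse diagrams $P_L$ and $P_R$, so the inductive hypothesis constrains each of $u_L, u_R$ to one of at most two ADFS words of its subtree (or uniquely determines it when the subtree is a single leaf).

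The technical crux is an alternation condition. In the case $w = u_L \oplus u_R$, I would show that $\poset(w) = P$ forces both $u_L$ and $u_R$ to be non-sum-decomposable --- equivalently, each non-trivial subtree's root operation must be $\ominus$. For necessity: if $u_L$ were sum-decomposable as $v_1 \oplus v_2$ with $v_1, v_2$ non-empty, then the letters of $v_2$ followed by those of $u_R$ would occupy consecutive positions in $w$ with values forming the interval $\{|v_1|+1,\ldots,k\}$, which is an interval of $w$ but not a node of $P$ (and symmetrically for $u_R$). For sufficiency: if both subwords are non-sum-decomposable, then any positionally consecutive block of $w$ whose values form an interval must lie entirely within $u_L$, lie entirely within $u_R$, or be all of $w$, because a proper boundary-crossing block would require either a proper positional suffix of $u_L$ whose values form an interval anchored at $k_L$, or a proper positional prefix of $u_R$ whose values form an interval anchored at $k_L+1$ --- each of which is a hallmark of sum-decomposability. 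The analogous analysis in the case $w = u_R \ominus u_L$ forces the subtree root operations to be $\oplus$.

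Combining the two choices at the root with the forced alternation below, and invoking the inductive hypothesis to pin each $u_L, u_R$ to a specific ADFS word, yields exactly two permutations in $\interval(P)$. To identify these as the two ADFS words of $P$ itself, I would note that the ``left-first at root'' search first traverses $P_L$ (placing smaller values first positionally, i.e., the $\oplus$ choice at $r$) and thereafter uses ``right-first'' inside each subtree (matching the forced $\ominus$ operation there); symmetrically for ``right-first at root.'' This recursive description exactly reproduces the two generators constructed above. The main obstacle will be the sufficiency half of the alternation rule: one must carefully rule out every possible boundary-crossing interval of $w$ other than $w$ itself, relying on the value-suffix / value-prefix characterization of sum-decomposability outlined above.
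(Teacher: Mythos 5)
Your proposal is correct and follows essentially the same route as the paper: the paper's (one-sentence) proof rests on exactly your ``alternation'' observation, namely that each internal vertex corresponds to $p\oplus q$ or $p\ominus q$ with $p,q$ respectively sum- or skew-indecomposable, so that the single binary choice at the root propagates down and leaves only the two ADFS words. Your induction and the necessity/sufficiency analysis of boundary-crossing intervals simply supply the details the paper declares to follow ``more or less immediately.''
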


\begin{proof}
This result follows more or less immediately from the fact that each internal vertex of the poset corresponds to a permutation of the form $p\oplus q$ or $p\ominus q$, where $p$ and $q$ are, respectively, sum or skew indecomposable. There are two choices for the permutation, depending on whether the maximal element represents a direct sum or a skew sum.
\end{proof}

Lemma~\ref{lem:poi} suggests using a recursive inflation to describe exactly which posets occur as $\poset(w)$ for some $w$, mimicking the successive inflation of permutation entries.

\begin{theorem}\label{thm:interval poset characterization}
Interval posets are exactly those that can be constructed by starting with the $1$-element poset, and recursively replacing minimal elements with dual claw posets, argyle posets, or binary tree posets.
\end{theorem}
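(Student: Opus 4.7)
The plan is to prove both directions of the equivalence by induction.

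For the forward direction---every interval poset is obtainable by the construction---I would do strong induction on $n = |w|$ and apply Corollary~\ref{cor:permutations are inflations or sums} to split the inductive step into three cases. In the simple-inflation case $w = v[p_1,\ldots,p_k]$ with $v$ simple and $k \geq 4$, the top element $[1,n]$ is fruitful, so Lemmas~\ref{lem:fruitful elements are simple inflations} and~\ref{lem:fruitful segments have just one parent and are disjoint} together show that $[1,n]$ covers exactly the $k$ pairwise-disjoint $p_i$-blocks and that no other interval lies strictly between $[1,n]$ and any block, making the portion of $\poset(w)$ above the blocks a copy of $\Lambda_k$; Lemma~\ref{lem:poi} then identifies the principal order ideal below each block with $\poset(p_i)$, which is constructible by induction. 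In the sum-decomposable case $w = p_1 \oplus \cdots \oplus p_k$ (maximal, each $p_i$ sum indecomposable, $k \geq 2$), I would prove the sub-claim that the intervals of $w$ containing at least one full block $p_i$ are exactly the consecutive unions $p_i \cup \cdots \cup p_j$ for $1 \leq i \leq j \leq k$. The proof uses the characterization that a permutation is sum indecomposable iff it has no proper prefix (by position) whose values are $\{1,\ldots,\ell\}$---equivalently, no proper suffix whose values are the top $\ell$---which rules out any \emph{partial} cross-boundary interval between adjacent blocks. The inclusion poset on these top-level intervals is visibly the argyle poset with $k$ leaves, and below each leaf $p_i$ sits $\poset(p_i)$ via Lemma~\ref{lem:poi}. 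The skew-decomposable case reduces to this one by Lemma~\ref{lem:reverse}.

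For the reverse direction---every construction yields an interval poset---I would induct on the number of replacement steps. The base case is the one-element poset, which is $\poset(w)$ for $w \in \mf{S}_1$. For the inductive step, suppose $P$ is built from $P' = \poset(u)$ by replacing a minimal element $x = \{u(j)\}$ with a building block $B$. Propositions~\ref{prop:fruitful trees are interval posets},~\ref{prop:argyle posets are interval posets}, and~\ref{prop:binary trees are interval posets} supply a permutation $q$ with $\poset(q) = B$; one takes $w$ to be the inflation of $u$ at position $j$ by $q$ and verifies $\poset(w) = P$.

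The main obstacle is the verification in the reverse direction at argyle and binary tree blocks, since there $q$ is sum- or skew-decomposable and a naive inflation could introduce cross-boundary intervals---combining a proper sub-interval of the $q$-block with letters of $u$ adjacent to position $j$---that do not occur in $P$. The remedy is to exploit the two available choices of $q$ in each such case (the argyle word and its reverse; the two ADFS words) and to construct $w$ top-down from the entire tree of building blocks, alternating between $\oplus$- and $\ominus$-style inflations along chains of argyle or binary tree blocks. This arranges each such block to occupy a \emph{maximal} layer of the substitution decomposition of $w$, at which point the sub-claim from the forward direction guarantees that no unwanted cross-boundary intervals arise. Dual claw blocks never cause trouble, since their $q$ is simple and thus has no proper intervals of any kind.
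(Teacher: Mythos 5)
Your proposal is correct and follows essentially the same route as the paper: both directions hinge on Corollary~\ref{cor:permutations are inflations or sums}, with the dual claw arising from inflations of simple permutations of length at least $4$ and the argyle poset arising from maximal (skew) sum decompositions, the blocks $p_i$ handled recursively via Lemma~\ref{lem:poi}. The main difference is one of thoroughness rather than method: the paper's proof is a two-sentence sketch of the forward direction only, whereas you also supply the cross-boundary sub-claim justifying the argyle structure (which the paper asserts without proof) and, more importantly, an explicit argument for the converse --- that every poset produced by the construction is realized --- including the genuine subtlety that a naive inflation by $12\cdots m$ inside a direct sum creates unwanted intervals, remedied by alternating $\oplus$- and $\ominus$-realizations down the tree of blocks. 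That alternation is exactly the mechanism implicit in the paper's ADFS words and in Proposition~\ref{prop:binary trees are interval posets}, so your fix is the intended one; the paper simply leaves this direction unstated.
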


\begin{proof}
We point out that if $w$ is the inflation of $v[p_1,\ldots,p_k]$ for $k \ge 4$, then $\poset(w)$ consists of a single maximal element covering the disjoint interval posets of $p_1,\ldots,p_k$ in some order (determined by $v^{-1}$). Otherwise, suppose without loss of generality that $w = p_1 \oplus \cdots \oplus p_k$ where the $p_i$ are sum indecomposable. Then there are two maximal proper intervals in $w$: those formed by $p_1 \oplus \cdots \oplus p_{k-1}$ and $p_2 \oplus \cdots \oplus p_k$. Hence $\poset(w)$ can be built by replacing minimal elements in an argyle poset.
\end{proof}

We demonstrate Theorem~\ref{thm:interval poset characterization} using the permutation $43187562$ whose interval poset was depicted in Figure~\ref{fig:ex poset(43187562)}.

\begin{example}\label{ex:building the tree}
$\poset(43187562)$ can be built by noting that
\begin{align*}
43187562 &= 3142[21,1,4312,1]\\
&= 3142[21,1,321[1,1,12],1],
\end{align*}
as shown in Figure~\ref{fig:building poset(43187562)}. 
\end{example}

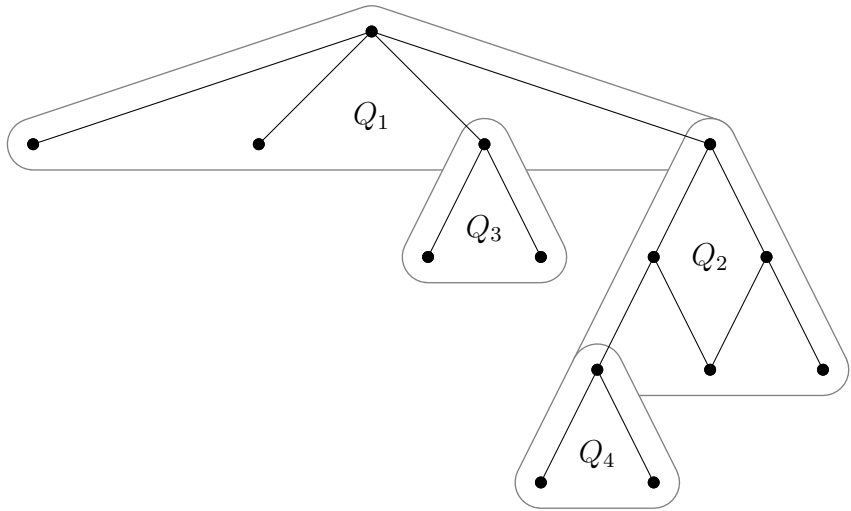
\begin{figure}[htbp]
\begin{tikzpicture}[scale=.75]
\fill (0,12) circle (3pt) coordinate (18);
\fill (-6,10) circle (3pt) coordinate (1);
\fill (-2,10) circle (3pt) coordinate (2);
\fill (2,10) circle (3pt) coordinate (34);
\fill (6,10) circle (3pt) coordinate (58);
\fill (1,8) circle (3pt) coordinate (3);
\fill (3,8) circle (3pt) coordinate (4);
\fill (5,8) circle (3pt) coordinate (57);
\fill (7,8) circle (3pt) coordinate (78);
\fill (4,6) circle (3pt) coordinate (56);
\fill (6,6) circle (3pt) coordinate (7);
\fill (8,6) circle (3pt) coordinate (8);
\fill (3,4) circle (3pt) coordinate (5);
\fill (5,4) circle (3pt) coordinate (6);
\foreach \x in {1,2,34,58} {\draw (18) -- (\x);}
\foreach \x in {3,4} {\draw (34) -- (\x);}
\foreach \x in {57,78} {\draw (58) -- (\x);}
\foreach \x in {56,7} {\draw (57) -- (\x);}
\foreach \x in {7,8} {\draw (78) -- (\x);}
\foreach \x in {5,6} {\draw (56) -- (\x);}
\foreach \x in {18,1,2,34,58,3,4,57,78,56,7,8,5,6} {\fill (\x) circle (3pt);}
\drawpolygon 18,1,58;
\drawpolygon 34,3,4;
\drawpolygon 58,56,8;
\drawpolygon 56,5,6;
\draw (0,10.5) node {$Q_1$};
\draw (6,8) node {$Q_2$};
\draw (2,8.5) node {$Q_3$};
\draw (4,4.5) node {$Q_4$};
\end{tikzpicture}
\caption{The poset $\poset(43187562)$ is constructed in four steps. The labeling of those steps refers to Example~\ref{ex:building the poset}.}
\label{fig:building poset(43187562)}
\end{figure}

\section{Enumerative properties of interval posets}\label{sec:enumerate}

Lemma~\ref{lem:reverse} showed that a permutation and its reverse have the same interval poset. Therefore, there are at most $n!/2$ interval posets with $n$ minimal elements. In fact, that is an overcount when $n\ge 4$, because simple permutations of a given size have the same poset. The enumeration below comes from infusing Theorem~\ref{thm:interval poset characterization} with Propositions~\ref{prop:fruitful trees are interval posets}, \ref{prop:argyle posets are interval posets}, and~\ref{prop:binary trees are interval posets}, and Lemma~\ref{lem:isolated cover}.

\begin{theorem}\label{thm:number of permutations for a poset}
Fix an interval poset $P$. Let $Q_0$ be the $1$-element poset, and let $Q_1, Q_2, \ldots$ be the dual claw, argyle, and binary tree posets that recursively replace minimal elements in the construction of an interval poset $P$, as described in Theorem~\ref{thm:interval poset characterization}. Then
$$\left| \interval(P) \right| = \prod_{i=0} \left| \interval(Q_i) \right|^{\varepsilon_i},$$
where
$$\varepsilon_i = \begin{cases}
1 & \text{if $i=0$,}\\
1 & \text{if $Q_i$ is a dual claw,}\\
1 & \text{if $Q_i$ replaces the child of a dual claw, and}\\
0 & \text{otherwise.}
\end{cases}$$
\end{theorem}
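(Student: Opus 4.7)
The plan is to induct on the recursion depth of the construction from Theorem~\ref{thm:interval poset characterization}, splitting the inductive step into cases based on the type of the topmost piece $Q_1$ of $P$. At each step I will describe how the permutations in $\interval(P)$ are assembled from choices at each piece, and verify which choices are independent and which are forced by the parent piece. The base case, where $P = Q_1$ consists of a single piece with no further replacements, follows directly from Propositions~\ref{prop:fruitful trees are interval posets}, \ref{prop:argyle posets are interval posets}, and~\ref{prop:binary trees are interval posets}.

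For the inductive step, let $P_1, \ldots, P_r$ be the sub-posets attached at those minimals of $Q_1$ that are further replaced. If $Q_1 = \Lambda_k$, then Lemma~\ref{lem:fruitful elements are simple inflations} together with Corollary~\ref{cor:permutations are inflations or sums} gives that each $w \in \interval(P)$ has the unique form $v[p_1, \ldots, p_k]$ with $v \in \mf{S}_k$ simple and each $p_i \in \interval(P_i)$ (or $p_i = 1$ if the $i$-th minimal is not replaced); all of these choices are independent, yielding $|\interval(P)| = \simple(k) \cdot \prod_i |\interval(P_i)|$. Applying induction to each $P_i$ produces the formula, with every top piece of each $P_i$ counted under the ``child of a dual claw'' clause. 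If instead $Q_1$ is an argyle or binary tree piece, Corollary~\ref{cor:permutations are inflations or sums} expresses $w$ as either a maximal direct sum $p_1 \oplus \cdots \oplus p_k$ of sum-indecomposables or a maximal skew sum of skew-indecomposables, and Lemma~\ref{lem:reverse} puts these two families into disjoint bijection via reversal $w \mapsto w^R$, contributing a free factor of $2 = |\interval(Q_1)|$. Within a single decomposition each $p_i$ must be sum- (respectively skew-) indecomposable, which is automatic when the top piece of $P_i$ is a dual claw (simple inflations are always indecomposable in both senses) but singles out exactly half of $\interval(P_i)$ when the top of $P_i$ is another argyle or binary tree piece.

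The technical heart of the proof is verifying this halving. Reversal is an involution on $\interval(P_i)$ that swaps maximally sum- and skew-decomposed generators; when the top piece of $P_i$ is argyle or binary tree, every element of $\interval(P_i)$ lies in exactly one of these two classes by Corollary~\ref{cor:permutations are inflations or sums}, and so the classes have equal size provided the involution is fixed-point-free. Fixed-point-freeness holds because $w = w^R$ with $w$ sum-decomposable would force $w$ to be simultaneously sum- and skew-decomposable, contradicting Corollary~\ref{cor:permutations are inflations or sums}. Combining the sub-poset contributions --- each $P_i$ giving $|\interval(P_i)|$ when its top is a dual claw and $|\interval(P_i)|/2$ when its top is argyle or binary tree --- with the global factor of $2$ from reversal reassembles exactly into the exponents $\varepsilon_i$ of the theorem: dual claws carry exponent $1$ unconditionally, and argyle or binary tree pieces carry exponent $1$ when they attach under a dual claw (or at the top) and exponent $0$ otherwise. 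Iterating this bookkeeping through the full recursive construction completes the induction and yields the stated product formula.
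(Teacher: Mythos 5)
Your proof is correct and follows essentially the same route as the paper, which only sketches the argument by combining Theorem~\ref{thm:interval poset characterization} with Propositions~\ref{prop:fruitful trees are interval posets}, \ref{prop:argyle posets are interval posets}, and~\ref{prop:binary trees are interval posets}: the choice of generator is free at a dual claw and at pieces attached beneath a dual claw (or at the top), while the sum-versus-skew choice at a decomposable piece sitting under another decomposable piece is forced by indecomposability of the components. Your explicit verification of the halving via the fixed-point-free reversal involution supplies detail the paper leaves implicit, and your ``(or at the top)'' clause correctly handles the topmost piece.
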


Each $Q_i$ in the statement of Theorem~\ref{thm:number of permutations for a poset} has three possibilities, and $\left|\interval(Q_i)\right|$ was computed for each of these in Propositions~\ref{prop:fruitful trees are interval posets}, \ref{prop:argyle posets are interval posets}, and~\ref{prop:binary trees are interval posets}:
$$\left| \interval(Q_i) \right| = \begin{cases}
1 & \text{if } i = 0,\\
\simple(k) & \text{if $i > 0$ and $Q_i$ is $\Lambda_k$, and}\\
2 & \text{otherwise}.
\end{cases}$$
Although there may be multiple ways to construct a given poset, this does not affect the enumeration of the interval generators: one cannot alter the position or size of any dual claw posets that appear.

\begin{example}\label{ex:building the poset}
Let $P$ be the interval poset depicted in Figure~\ref{fig:ex poset(43187562)} and constructed as in Figure~\ref{fig:building poset(43187562)}:
$$\left| \interval(P) \right| = 1^1 \cdot 2^1 \cdot 2^1 \cdot 2^1 \cdot 2^0 = 8.$$
Indeed, as outlined in Example~\ref{ex:building the tree}, we can choose interval generators of $Q_1$, of $Q_2$, and of $Q_3$, but we have no choice when it comes to interval generators of $Q_4$. The $8$ interval generators of $P$ are constructed from the components $Q_i$ as follows.
\begin{align*}
2413 \rightsquigarrow 245613 \rightsquigarrow 2567134 \rightsquigarrow 26578134\\
2413 \rightsquigarrow 245613 \rightsquigarrow 2567143 \rightsquigarrow 26578143\\
2413 \rightsquigarrow 265413 \rightsquigarrow 2765134 \rightsquigarrow 28756134\\
2413 \rightsquigarrow 265413 \rightsquigarrow 2765143 \rightsquigarrow 28756143\\
3142 \rightsquigarrow 314562 \rightsquigarrow 3415672 \rightsquigarrow 34165782\\
3142 \rightsquigarrow 314562 \rightsquigarrow 4315672 \rightsquigarrow 43165782\\
3142 \rightsquigarrow 316542 \rightsquigarrow 3417652 \rightsquigarrow 34187562\\
3142 \rightsquigarrow 316542 \rightsquigarrow 4317652 \rightsquigarrow 43187562
\end{align*}
\end{example}

The definition of $\varepsilon_i$ in Theorem~\ref{thm:number of permutations for a poset} has immediate implications for a certain family of interval posets (to be studied in more depth in the next section).

\begin{corollary}\label{cor:no fruitful means 2 permutations}
An interval poset with no fruitful elements has exactly two interval generators. Moreover, the only interval posets with exactly two interval generators are $\Lambda_4$ and those interval posets with no fruitful elements.
\end{corollary}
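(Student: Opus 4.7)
My plan is to apply Theorem~\ref{thm:number of permutations for a poset} and do a short case analysis. The central fact is that every nontrivial factor in the product $\prod_i |\interval(Q_i)|^{\varepsilon_i}$ is at least $2$: a dual-claw piece $Q_i = \Lambda_k$ forces $\simple(k) \ge 2$ (since $k \ge 4$), and each argyle or binary-tree piece with $\varepsilon_i = 1$ contributes exactly $2$.

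For part (a), I would first observe that by Lemma~\ref{lem:fruitful elements are simple inflations} the fruitful elements of $P$ are in bijection with the dual-claw pieces appearing in any recursive construction of $P$; thus ``$P$ has no fruitful element'' amounts to ``no $Q_i$ is a dual claw''. In that setting, the only piece with $\varepsilon_i = 1$ is the outermost replacement $Q_1$: for $i \ge 2$, the piece $Q_i$ replaces a child of an argyle or binary tree (never of a dual claw), so $\varepsilon_i = 0$. The product therefore collapses to $|\interval(Q_1)| = 2$.

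For part (b), I would suppose $|\interval(P)| = 2$. If no dual claws appear in the construction, we are in case (a) and $P$ has no fruitful elements. Otherwise, fix a dual-claw piece $Q_j = \Lambda_k$; it contributes $\simple(k)$ to the product, and for the product to equal $2$ we need $\simple(k) = 2$ together with every other factor equal to $1$. Since $\simple(k) \ge 6$ for $k \ge 5$, we must have $k = 4$. Next, any replacement of a child of $\Lambda_4$ would introduce an extra factor of at least $2$ (as such replacements carry $\varepsilon_i = 1$), so no such replacement occurs. Finally, if $Q_j$ were not $Q_1$, then the outermost replacement $Q_1$ would be an argyle or binary tree contributing its own factor of $2$, raising the product to at least $4$. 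Hence $Q_1 = Q_j = \Lambda_4$ and $P = \Lambda_4$.

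The main obstacle is simply the bookkeeping of which $Q_i$ carry $\varepsilon_i = 1$ in Theorem~\ref{thm:number of permutations for a poset} — in particular, the convention that the outermost replacement $Q_1$ always contributes a factor — after which both halves of the corollary reduce to the case split above.
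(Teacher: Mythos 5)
Your argument is correct and follows the same route the paper intends: the paper states this corollary without a separate proof, presenting it as an immediate consequence of the definition of $\varepsilon_i$ in Theorem~\ref{thm:number of permutations for a poset}, and your case analysis (each contributing factor is at least $2$; $\simple(k)\ge 6$ for $k\ge 5$; fruitful elements correspond exactly to dual-claw pieces) is exactly the bookkeeping that makes that ``immediate'' claim precise. Your observation that the first replacement $Q_1$ must always carry $\varepsilon_1=1$ (even when it is argyle or a binary tree) is the right reading of the theorem, since otherwise the formula would contradict Propositions~\ref{prop:argyle posets are interval posets} and~\ref{prop:binary trees are interval posets}.
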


\section{Special families of interval posets}\label{sec:special families}

The poset types discussed in Section~\ref{sec:characterizing} motivate us to analyze certain types of interval posets in further detail. Of the many options for what to study, we focus on three:
\begin{itemize}
\item trees,
\item binary posets (that is, those with no fruitful elements), and
\item binary trees.
\end{itemize}

We begin by classifying tree interval posets. Theorem~\ref{thm:interval poset characterization} suggests that argyle posets are what need to be avoided, and Proposition~\ref{prop:argyle posets are interval posets} hints at how that can be done.

\begin{theorem}\label{thm:interval trees}
$\poset(w)$ is a tree interval poset if and only if $w$ contains no interval of the form $p_1\oplus p_2 \oplus p_3$ or $p_1\ominus p_2 \ominus p_3$.
\end{theorem}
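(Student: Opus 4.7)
The plan is to characterize the tree property via the cover structure and then translate between ``has two covers'' and ``contains a three-term $\oplus$ or $\ominus$ interval.'' Recall that a Hasse diagram (rooted at the top) is a tree exactly when every non-maximum element has a unique cover; by Lemma~\ref{lem:isolated cover}, this means that every interval strictly containing such an element also contains a single specified interval strictly above it. Failure at some $H$ is thus equivalent to the existence of two incomparable covers of $H$.

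I will do the ``no such interval $\Rightarrow$ tree'' direction by contrapositive. Suppose $w$ contains an interval of the form $p_1 \oplus p_2 \oplus p_3$. The value ranges occupied by $p_2$, by $p_1 \oplus p_2$, and by $p_2 \oplus p_3$ are themselves intervals of $w$; call them $H$, $I_{12}$, and $I_{23}$. Then $I_{12}$ and $I_{23}$ are incomparable, each strictly contains $H$, and $I_{12} \cap I_{23} = H$. If $H$ were covered by a unique $C \in \poset(w)$, Lemma~\ref{lem:isolated cover} would force $C \subseteq I_{12} \cap I_{23} = H$, contradicting $C \supsetneq H$. Hence $\poset(w)$ is not a tree. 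The skew case is symmetric, and can also be read off from Lemma~\ref{lem:reverse}.

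For the converse, suppose $\poset(w)$ is not a tree and pick $H \in \poset(w)$ with two distinct covers $I_1$ and $I_2$. By Lemma~\ref{lem:interval intersection}, $I_1 \cap I_2$ is an interval; since it sits between $H$ and $I_1$ with $H$ covered by $I_1$, the cover condition forces $I_1 \cap I_2 = H$. Counting matching sizes of position and value ranges shows that $I_1 \cup I_2$ is again an interval of $w$. Relabelling so that $I_1$ starts weakly to the left of $I_2$ in position, and using that neither $I_j$ contains the other, the positions of $I_1 \cup I_2$ split into three nonempty consecutive blocks: the positions of $I_1 \setminus H$, the positions of $H$, and the positions of $I_2 \setminus H$. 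The corresponding value-sets are three disjoint integer intervals whose union is $I_1 \cup I_2$, and the middle block's value-set is $H$. A two-case check on which side of $H$ the outer value-sets fall shows that the three blocks are arranged either in increasing value order (yielding $p_1 \oplus p_2 \oplus p_3$) or in decreasing value order (yielding $p_1 \ominus p_2 \ominus p_3$).

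The hard part is the bookkeeping in the reverse direction: pinning down that the middle \emph{positional} block carries exactly the \emph{value}-block $H$, and then forcing the two outer value-blocks to lie entirely on opposite sides of $H$ on the integer line. The key observation that makes the case analysis collapse to just two configurations is the identity $I_1 \cap I_2 = H$, which simultaneously controls the intersection in positions and in values and rules out any interleaving of the three value-blocks.
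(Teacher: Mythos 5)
Your proof is correct, but it takes a genuinely different route from the paper's. The paper disposes of this theorem in two lines by appealing to Theorem~\ref{thm:interval poset characterization}: an interval poset is a tree exactly when its recursive construction uses no argyle building block with three or more minimal elements, and such a block is used exactly when $w$ has an interval that is an inflation of $123$ or $321$. You instead argue locally and from first principles: a poset with a unique maximum is a tree iff every other element has a unique cover, and you show that an element $H$ with two incomparable covers $I_1, I_2$ is equivalent to the existence of a three-term sum or skew-sum interval. Your forward direction (via Lemma~\ref{lem:isolated cover} applied to $H = I_{12}\cap I_{23}$) is clean, and your converse correctly exploits $I_1\cap I_2 = H$ (forced by the covering relation together with Lemma~\ref{lem:interval intersection}) to show that each of $I_1\setminus H$ and $I_2\setminus H$ is a single value-block lying entirely on one side of $H$, on opposite sides from each other, so that $I_1\cup I_2$ is the desired interval $p_1\oplus p_2\oplus p_3$ or $p_1\ominus p_2\ominus p_3$; the ``two-case check'' you defer does go through for exactly the reason you identify. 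The trade-off: the paper's proof is shorter but leans on the full substitution-decomposition machinery behind Theorem~\ref{thm:interval poset characterization}, while yours is self-contained, uses only the elementary interval lemmas from Sections~\ref{sec:basic facts} and~\ref{sec:structural}, and has the added benefit of exhibiting an explicit local witness (an element with two covers) for the failure of the tree property.
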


\begin{proof}
$\poset(w)$ is a tree if and only if $\poset(w)$ can be constructed without using argyle posets that have at least three minimal elements. By Theorem~\ref{thm:interval poset characterization}, this is possible if and only if $w$ has no interval that is an inflation of $123$ or $321$.
\end{proof}

The previous result can also be stated in terms of the bivincular pattern containment introduced in \cite{bousquet-melou claesson dukes kitaev}.

Binary interval posets can also be characterized by patterns, this time by classical pattern avoidance.

\begin{theorem}\label{thm:binary}
$\poset(w)$ is binary if and only if $w$ avoids the patterns $2413$ and $3142$.
\end{theorem}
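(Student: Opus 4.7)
The plan is to use Theorem~\ref{thm:interval poset characterization} together with Corollary~\ref{cor:permutations are inflations or sums} to reduce the statement to the classical fact that every simple permutation of length at least $4$ contains $2413$ or $3142$. I proceed by induction on $|w|$.

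The key translation: by Theorem~\ref{thm:interval poset characterization}, combined with Lemmas~\ref{lem:poi} and \ref{lem:fruitful elements are simple inflations}, $\poset(w)$ is binary if and only if the recursive substitution decomposition of $w$ never uses a simple permutation of length at least $4$; equivalently, $w$ is a separable permutation, i.e., $w$ is built from single letters using only $\oplus$ and $\ominus$.

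For ($\Leftarrow$), suppose $w$ avoids $2413$ and $3142$. By Corollary~\ref{cor:permutations are inflations or sums}, either $w$ is sum or skew decomposable, or $w = v[p_1,\ldots,p_k]$ with $v$ simple and $k \ge 4$. In the latter case, $v$ contains $2413$ or $3142$ as a pattern; picking one letter from each $p_i$ lifts this to an occurrence in $w$, a contradiction. So $w$ decomposes as $p_1 \oplus \cdots \oplus p_j$ or $p_1 \ominus \cdots \ominus p_j$ with $j \ge 2$. Each $p_i$ appears as a pattern of $w$ and therefore also avoids $2413$ and $3142$, so by induction each $\poset(p_i)$ is binary. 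Then $\poset(w)$ is obtained from an argyle poset by replacing its minimal elements with binary interval posets, and hence is itself binary. For ($\Rightarrow$), suppose $\poset(w)$ is binary. By the translation above, $w$ is separable, so $w = u \oplus v$ or $w = u \ominus v$ with $u,v$ separable. Since both $2413$ and $3142$ are simple of length $4$, neither is of the form $p \oplus q$ or $p \ominus q$; hence any occurrence in $w$ of either pattern lies entirely within $u$ or entirely within $v$. By induction, $u$ and $v$ avoid both patterns, so $w$ does as well.

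The main obstacle is the fact invoked in the $(\Leftarrow)$ case: every simple permutation of length at least $4$ contains $2413$ or $3142$. This is the heart of the Bose--Buss--Lubiw characterization of separable permutations, and can be proved by a short induction on length using the Schmerl--Trotter theorem (every simple permutation of length $n \ge 5$ has a simple subpermutation of length $n-1$ or $n-2$), with the two simple permutations of $\mf{S}_4$, namely $2413$ and $3142$, as the base case.
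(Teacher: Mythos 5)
Your proof is correct and follows essentially the same route as the paper: translate ``$\poset(w)$ is binary'' into ``the substitution decomposition of $w$ uses no simple permutation of length $\ge 4$,'' i.e.\ $w$ is separable, and then invoke the characterization of separable permutations as the $\{2413,3142\}$-avoiders. The only difference is that the paper simply cites that characterization (to \cite{dppa}), whereas you prove it by induction on $|w|$, with the crux being that every simple permutation of length at least $4$ contains $2413$ or $3142$ --- your Schmerl--Trotter argument for that fact is sound (the descent from length $n$ to length $n-1$ or $n-2$ cannot land on length $3$, since $\simple(3)=0$, so it terminates at length $4$), making your version self-contained at the cost of importing one external theorem.
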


\begin{proof}
Any permutation that is an inflation of a simple permutation has a vertex of degree equal to the size of the simple permutation. Thus, an interval poset is binary if and only if every node represents either a direct sum or a skew sum. Such permutations are exactly the \emph{separable permutations}, which are characterized by avoidance of $2413$ and $3142$ (see \cite[P0013]{dppa}).
\end{proof}

Separable permutations were defined in \cite{bose buss lubiw} using the language of ``separating trees.'' These trees bear some resemblance to the interval posets we define here, but they are not the same. For example, the interval poset of the the separable permutation $1234$ is not a tree. Separable permutations are enumerated by the large Schroeder numbers \cite[A006318]{oeis}. Corollary~\ref{cor:no fruitful means 2 permutations} allows us to count binary interval posets (whether or not they are trees).

\begin{corollary}\label{cor:binary enumeration}
Fix $n \ge 2$. The number of binary interval posets with $n$ minimal elements is $(S_n)/2$, where $S_n$ is the $n$th large Schroeder number.
\end{corollary}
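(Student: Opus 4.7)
The plan is to combine the characterization of binary interval posets from Theorem~\ref{thm:binary} with the fiber-size computation in Corollary~\ref{cor:no fruitful means 2 permutations}, so that the enumeration reduces to counting separable permutations.

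First, I would verify that the map $w \mapsto \poset(w)$, restricted to separable permutations in $\mf{S}_n$, lands exactly in the set of binary interval posets with $n$ minimal elements. By Theorem~\ref{thm:binary}, $\poset(w)$ is binary if and only if $w$ is separable, so the image is precisely the set to be counted and the preimage of any binary interval poset consists entirely of separable permutations.

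Next, I would determine the size of each fiber. Corollary~\ref{cor:no fruitful means 2 permutations} asserts that every interval poset with no fruitful elements has exactly two interval generators. Since a binary interval poset has no fruitful element by definition, every binary interval poset $P$ with $n\ge 2$ minimal elements satisfies $|\interval(P)|=2$. (One should note that $\Lambda_4$, the other case flagged in Corollary~\ref{cor:no fruitful means 2 permutations}, is not binary, since its maximal element covers four elements, so it is not double-counted here.) Hence the restricted map $w \mapsto \poset(w)$ is exactly $2$-to-$1$.

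Putting these together, the number of binary interval posets with $n$ minimal elements equals half the number of separable permutations in $\mf{S}_n$, which is $S_n/2$ by the standard fact cited in the proof of Theorem~\ref{thm:binary}. There is no genuine obstacle here; the only point requiring care is to confirm that $\Lambda_4$ is correctly excluded from the class of binary posets, so that the uniform fiber count of $2$ applies across the entire target set.
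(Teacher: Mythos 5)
Your argument is correct and follows the same route the paper intends (the paper states this corollary without a written proof, relying on Theorem~\ref{thm:binary}, the Schroeder-number enumeration of separable permutations, and the fiber count from Corollary~\ref{cor:no fruitful means 2 permutations}). Your explicit check that $\Lambda_4$ is not binary, so the $2$-to-$1$ count applies uniformly, is a worthwhile detail the paper leaves implicit.
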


The final special family that we study can be classified by a simple observation.

\begin{corollary}\label{cor:binary trees}
$\poset(w)$ is a binary tree interval poset if and only if $w$ avoids $2413$, $3142$, and has no interval that can be written as the direct or skew sum of three or more elements (equivalently, avoids $2413$, $3142$, and any inflation of the bivincular patterns $\ub{\ob{123}}$ and $\ub{\ob{321}}$).
\end{corollary}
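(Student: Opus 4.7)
My plan is to view the corollary as the conjunction of Theorems~\ref{thm:interval trees} and~\ref{thm:binary}, since a binary tree interval poset is simply one that is both binary and a tree. By Corollary~\ref{cor:no 3 kids}, every element of $\poset(w)$ covers either $0$, $2$, or at least $4$ elements, so forbidding fruitful elements (the binary condition) is equivalent to requiring each non-minimal node to have exactly two children. Together with the tree property---that no node is covered more than once---this matches the Hasse-diagram description in Definition~\ref{defn:poset types}.

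I then apply each earlier theorem to the appropriate half of the condition. Theorem~\ref{thm:binary} gives that $\poset(w)$ is binary iff $w$ avoids the classical patterns $2413$ and $3142$. Theorem~\ref{thm:interval trees} gives that $\poset(w)$ is a tree iff $w$ has no interval of the form $p_1\oplus p_2 \oplus p_3$ or $p_1\ominus p_2 \ominus p_3$; equivalently, no interval of $w$ is a direct or skew sum of three or more components, since any $k$-fold sum with $k \ge 3$ contains the sum of its first three summands as a sub-interval (consecutive in both positions and values). Taking the conjunction of these two forbidden-configuration lists yields the first formulation of the corollary.

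For the parenthetical bivincular reformulation from~\cite{bousquet-melou claesson dukes kitaev}, I would unpack the brackets: the overbracket on $\ob{123}$ forces the three entries to occupy consecutive positions, while the underbracket on $\ub{123}$ forces their values to form a block $\{k,k+1,k+2\}$. An inflation then replaces each of the three ascending entries with a nonempty block, producing precisely an interval of $w$ of the form $p_1 \oplus p_2 \oplus p_3$; the analogous translation for $\ub{\ob{321}}$, compatible with Lemma~\ref{lem:reverse}, handles the skew case. The only remaining obstacle is bookkeeping---verifying that the two decoupled pattern conditions can be intersected without hidden interaction---which is immediate since each of Theorems~\ref{thm:interval trees} and~\ref{thm:binary} is already a standalone biconditional characterization.
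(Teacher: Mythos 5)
Your proposal is correct and follows essentially the same route as the paper, whose entire proof is the one-line observation that a binary tree interval poset is exactly one that is both a tree and binary, so the corollary is the conjunction of Theorems~\ref{thm:interval trees} and~\ref{thm:binary}. The extra detail you supply (the cover-number case analysis via Corollary~\ref{cor:no 3 kids} and the unpacking of the bivincular notation) is accurate elaboration of steps the paper leaves implicit.
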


\begin{proof}
An interval poset is a binary tree interval poset if and only if it is both a tree and a binary interval poset.
\end{proof}

A binary tree interval poset can be constructed in a single step. This allows us to enumerate the permutations described in Corollary~\ref{cor:binary trees}.

\begin{corollary}\label{cor:binary tree enumeration}
Fix $n\ge 2$.
$$
\left|\begin{Bmatrix}\text{binary tree interval posets}\\ \text{with $n$ minimal elements}\end{Bmatrix}\right| =
\left| \left\{w \in \mf{S}_n : \begin{matrix}w \text{ avoids } 2413, 3142 \text{ and}\\ \text{ inflations of } \ub{\ob{123}} \text{ and } \ub{\ob{321}}\end{matrix}\right\} \right| = 2C_{n-1}$$
where $C_n$ is the $n$th Catalan number.
\end{corollary}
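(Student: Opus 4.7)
The plan is to convert the permutation enumeration into a poset enumeration via the ADFS pairing of Proposition~\ref{prop:binary trees are interval posets}, and then to recognize the resulting poset count as a Catalan number.

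By Corollary~\ref{cor:binary trees}, the permutations $w \in \mf{S}_n$ that avoid $2413$, $3142$ and all inflations of $\ub{\ob{123}}$ and $\ub{\ob{321}}$ are precisely the interval generators of binary tree interval posets with $n$ minimal elements. Distinct posets $P$ have disjoint generator sets $\interval(P)$, and by Proposition~\ref{prop:binary trees are interval posets} each such $P$ contributes exactly two interval generators, namely its two ADFS words. Since a permutation of length $n \ge 2$ is never equal to its reverse, these two ADFS words are distinct, so the permutation set has size $2N_n$, where $N_n$ denotes the number of binary tree interval posets with $n$ minimal elements.

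It remains to show $N_n = C_{n-1}$, so that the total matches the claimed $2C_{n-1}$. By Theorem~\ref{thm:interval poset characterization}, a binary tree interval poset is constructed in a \emph{single} substitution step from the one-element poset, with no further recursion required: one simply substitutes a binary tree poset for the sole minimal element. Consequently, under the canonical embedding (which fixes the left-to-right order of children at each internal node), binary tree interval posets with $n$ minimal elements are in bijection with ordered full rooted binary trees on $n$ leaves. This is the standard Catalan enumeration, giving $N_n = C_{n-1}$ and hence $2N_n = 2C_{n-1}$.

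The only real ``obstacle'' is bookkeeping rather than computation: one should verify that the canonical embedding genuinely records the left-right order of the two children at each internal node (so that distinct full binary tree shapes give distinct interval posets), and that no binary tree interval poset coincides with a dual claw poset $\Lambda_k$ or with an argyle poset on three or more minimal elements. Both checks follow at once from the requirement in Definition~\ref{defn:poset types} that every non-minimal node of a binary tree poset cover exactly two elements, combined with Corollary~\ref{cor:no 3 kids}.
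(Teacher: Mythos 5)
Your proposal is correct and follows essentially the same route as the paper: count full binary trees with $n$ leaves by $C_{n-1}$, note via Proposition~\ref{prop:binary trees are interval posets} (the paper also cites Corollary~\ref{cor:no fruitful means 2 permutations}) that each such poset has exactly two interval generators, and invoke Corollary~\ref{cor:binary trees} for the pattern characterization. Your extra bookkeeping --- that the two ADFS words are genuinely distinct for $n \ge 2$ and that distinct tree shapes yield distinct posets --- is a welcome refinement but not a departure from the paper's argument.
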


\begin{proof}
There are $C_{n-1}$ full binary trees with $n$ leaves (see, for example, \cite{stanley catalan}).
By Proposition~\ref{prop:binary trees are interval posets} and Corollary~\ref{cor:no fruitful means 2 permutations}, each binary tree interval poset is generated by exactly $2$ permutations. The result then follows from Corollary~\ref{cor:binary trees}.
\end{proof}

\section{Directions for further research}\label{sec:further}

There are many directions in which to continue the study of interval posets for permutations. We highlight a selection of such questions here.

In Section~\ref{sec:special families}, binary interval posets and binary tree interval posets were enumerated. An enumeration of tree interval posets, remains elusive.

\begin{question}
How many tree interval posets have $n$ minimal elements?
\end{question}

Corollary~\ref{cor:no fruitful means 2 permutations} describes the interval posets with exactly two interval generators (necessarily $w$ and $w^{R}$ for some $w$).

\begin{question}
How many interval posets have exactly two interval generators?
\end{question}

When a poset has exactly two interval generators, we know how those two generators are related: they are reverses of each other. Beyond this restricted setting, though, we wonder what other relations might occur.

\begin{question}
What properties are shared among the interval generators of a poset $P$? That is, what can be said about the set $\interval(P)$, besides that it is closed under reversal?
\end{question}

\section*{Acknowledgements}

I am very grateful to the thoughtful comments of anonymous referees, particularly during this challenging time.


\begin{thebibliography}{99}

\bibitem{albert atkinson} M.~H.~Albert and M.~D.~Atkinson, Simple permutations and pattern restricted permutations, \textit{Discrete Math.} \textbf{300} (2005), 1--15.

\bibitem{bose buss lubiw} P.~Bose, J.~Buss, and A.~Lubiw, Pattern matching for permutations, \textit{Inform.~Process.~Lett.} \textbf{65} (1998), 277-283.

\bibitem{bousquet-melou claesson dukes kitaev} M.~Bousquet-M\'elou, A.~Claesson, M.~Dukes, and S.~Kitaev, $(2+2)$-free posets, ascent sequences and pattern avoiding permutations, \textit{J.~Combin.~Theory, Ser.~A} \textbf{117} (2010), 884--909.

\bibitem{brignall} R.~Brignall, A survey of simple permutations, in \emph{Permutation Patterns}, S.~Linton, N.~Ru\v{s}kuc, and V.~Vatter, eds., 41--65, Cambridge University Press, Cambridge, England, 2010.

\bibitem{foldes} S.~F\"oldes, On intervals in relational structures, \textit{Z.~Math.~Logik Grundlag.~Math.} \textbf{26} (1980), 97--101.

\bibitem{oeis} OEIS Foundation Inc. (2020), The On-Line Encyclopedia of Integer Sequences, \url{http://oeis.org}.

\bibitem{quackenbush} R.~W.~Quackenbush, Planar lattices, in \textit{Proceedings of the University of Houston Lattice Theory Conference 1973}, S.~Fajtlowicz and K.~Kaiser, eds., 512--518, Department of Mathematics, University of Houston, Houston, Texas, 1973.

\bibitem{ec1} R.~P.~Stanley, \textit{Enumerative Combinatorics, vol.~1}, Cambridge University Press, Cambridge Studies in Applied Mathematics 49, Cambridge, 2011.

\bibitem{stanley catalan} R.~P.~Stanley, \textit{Catalan Numbers}, Cambridge University Press,  Cambridge, 2015.

\bibitem{dppa} B.~E.~Tenner, Database of Permutation Pattern Avoidance, \url{https://math.depaul.edu/~bridget/patterns.html}.


\end{thebibliography}
\end{document}